\documentclass{article}
\usepackage[utf8]{inputenc}
\usepackage{graphicx}
\usepackage{mathtools,amsmath,amssymb,amsfonts,amsthm}
\usepackage{mathalfa}
\usepackage{tikz,tikz-cd}
\usetikzlibrary{arrows}
\usetikzlibrary{decorations.markings}
\tikzset{degil/.style={line width=0.5pt,double distance=5pt,
        decoration={markings,
        mark= at position 0.5 with {
              \node[transform shape] (tempnode) {$\backslash\backslash$};
              }
          },
          postaction={decorate}
}
}

\tikzset{
commutative diagrams/.cd,
arrow style=tikz,
diagrams={>=open triangle 45, line width=0.5pt}}

\usetikzlibrary{calc}
\usepackage{color}
\usepackage{floatrow}   

\usepackage{url}
\usepackage{natbib}
\usepackage[backgroundcolor=white,linecolor=yellow,bordercolor=yellow,textcolor=black,textsize=small]{todonotes}
\usepackage[margin=0.5in]{geometry}

\usepackage{enumitem}

\graphicspath{{./figures/}}

\usepackage[english]{babel}

\usepackage{tabularx}

\usepackage{xcolor}

\usepackage{verbatim}
\usepackage{xspace}

\usepackage{amsfonts}
\usepackage{mathtools}
\usepackage{amsmath}

\usepackage{amssymb}

\usepackage{setspace}

\usepackage{colortbl}

\usepackage{fullpage}
\usepackage{ifthen}
\usepackage{tikz}
\usetikzlibrary{automata,positioning, arrows}
\usepackage{subcaption}
\usepackage{microtype}

\definecolor{green1}{rgb}{0.0, 0.5, 0.0}
\theoremstyle{plain}
\newtheorem{thm}{Theorem}
\newtheorem{lemma}{Lemma}

\theoremstyle{definition}
\newtheorem{fact}{Fact}
\newtheorem{defn}{Definition}

\usepackage[subpreambles=true]{standalone}
\theoremstyle{remark}
\newtheorem{myexample}{Example}
\newtheorem{rem}{Remark}
\newtheorem{assumption}{Assumption}

\newcommand{\R}{\mathbb{R}}{}
\newcommand{\N}{\mathbb{N}}

\newcommand{\Ag}[1]{\textcolor{brown}{\textbf{AG:} #1}}

\makeatletter
\newsavebox\myboxA
\newsavebox\myboxB
\newlength\mylenA

\newcommand*\pbar[1]{%
  \hbox{%
     \vbox{%
      \hrule height 0.7pt 
      \kern0.35ex
      \hbox{%
         \kern-0.0em
         \ensuremath{#1}%
         \kern-0.0em
      }%
     }%
  }%
} 

\newcommand*\xbar[2][0.75]{%
    \sbox{\myboxA}{$\m@th#2$}%
    \setbox\myboxB\null
    \ht\myboxB=\ht\myboxA%
    \dp\myboxB=\dp\myboxA%
    \wd\myboxB=#1\wd\myboxA
    \sbox\myboxB{$\m@th\pbar{\copy\myboxB}$}
    \setlength\mylenA{\the\wd\myboxA}
    \addtolength\mylenA{-\the\wd\myboxB}%
    \ifdim\wd\myboxB<\wd\myboxA%
       \rlap{\hskip 0.5\mylenA\usebox\myboxB}{\usebox\myboxA}%
    \else
        \hskip -0.5\mylenA\rlap{\usebox\myboxA}{\hskip 0.5\mylenA\usebox\myboxB}%
    \fi}
\makeatother

\title{Fixed-Time and Arbitrarily Fast Exponential Stabilization of 
Discrete-Time Switched Linear Systems}
\author{}

\date{}
\begin{document}

\maketitle

 \begin{abstract}
 In this paper, we first study the fixed-time stabilizability of discrete-time switched linear control systems. Using a geometric approach, we derive conditions under which such systems can be stabilized within a prescribed number of steps, independently of the switching sequence. We address both the mode-dependent case, where the controller has access to the active mode, and the mode-independent case, where a common feedback law must be employed. For each setting, we present constructive procedures to compute the stabilizing state-feedback gains. 
 Building on these results,
 we introduce a structural decomposition of
 switched systems, which serves to simplify stabilizability analysis and controller design.
 This allows us to establish the equivalence between fixed-time stabilizability and arbitrarily fast exponential stabilizability. The effectiveness of the proposed methods is illustrated through numerical examples.
 \end{abstract}

\section{Introduction}
In this paper, given matrices $A_1, \dots, A_M \in \mathbb{R}^{n \times n}$ and $B_1, \dots, B_M \in \mathbb{R}^{n \times m}$, we consider the discrete-time switched control system  
\begin{equation}
    x(t+1) = A_{\sigma(t)} x(t) + B_{\sigma(t)} u(t), \quad t \in \mathbb{N} \label{eq:linSS}
\end{equation}
where $\sigma: \mathbb{N} \to  \mathcal
M:= \{1, \dots, M\}$ is a switching signal and $u: \mathbb{N} \to \mathbb{R}^m$ is a control input.
The set of all possible switching sequences $\sigma = (\sigma(0), \sigma(1), \dots)$ is denoted by $\mathcal{S} := \mathcal{M}^{\infty}$. 
A key distinction in the study of system~\eqref{eq:linSS} lies in whether the switching law \(\sigma\) is treated as an external disturbance or as a controllable input to be designed in order to achieve a desired objective. 
Often, in the literature, the focus has been on the switching stabilization problem for the autonomous system ($x(t+1) = A_{\sigma(t)} x(t)$), see e.g., \cite{FiaGir16,GerCol06,WICKS1998140}.
For systems with both $\sigma$ and $u$ subject to control, see e.g., \cite{decarlo2000,HuMaLIn08,LinAnt08,ZhangAbate09}.

On the other hand, when the switching signal $\sigma$ is regarded as an unmodifiable external disturbance, several contributions have addressed the design of feedback controls $u$ that can stabilize~\eqref{eq:linSS} despite the arbitrary behavior of $\sigma$.
Significant results in this direction include~\cite{BlaMiaSav07}, where it is shown that controllers linear in each mode are, in general, not sufficient to achieve stabilization, and~\cite{DaafBer01}, which introduces poly-quadratic stability conditions for systems with time-varying uncertainty. Key advancements for stabilization are presented in~\cite{LeeDull06, LeeKha09}, where the authors develop memory-dependent (path-dependent) quadratic Lyapunov functions, leading to uniform stabilization results via LMI-based synthesis of mode-dependent feedback laws.
In this work, we focus on this latter setting, addressing the stabilization of~\eqref{eq:linSS} under arbitrary switching, where the switching signal is external and unmodifiable.
This line of research has been recently extended in~\cite{dellarossa24} through the framework of path-complete Lyapunov functions, leveraging graph-theoretic tools to derive novel LMI-based stabilization results.
In particular, this approach leads to LMI conditions that enable the synthesis of piecewise-quadratic control Lyapunov functions, from which piecewise linear feedback controllers can be systematically derived, thereby establishing a connection with earlier works on piecewise control strategies and Lyapunov-based stabilization \cite{GoeHu06,HuBla10}. 
Building on these sufficient LMI conditions, the subsequent work~\cite{lima2025} completes the framework by deriving also necessary conditions within the same tractable LMI formulation.
In other recent works, \cite{HU2024} and \cite{hu2017resilient}
investigate the problem of mode-dependent
and mode-independent
stabilization by introducing the notion of the fastest stabilizing rate, referred to here as the \emph{minimal growth rate}, which serves as a quantitative measure of a system’s stabilizability. This concept generalizes the classical joint spectral radius (JSR), originally introduced by \cite{RotaStrang1960} and thoroughly analyzed in \cite{jungers2009}, to the controlled setting. 

In the autonomous case, it is well known that a zero JSR is equivalent to fixed-time stability \cite[Proposition 2.1]{jungers2009}. It is therefore natural to ask whether the same equivalence holds in the presence of control inputs. Motivated by this, in this manuscript, we focus on characterizing the structure of switched systems \eqref{eq:linSS} that can be stabilized with arbitrarily fast convergence rates, that is, systems whose minimal growth rate is exactly zero. We prove that the aforementioned equivalence does extend to the controlled case, thus providing a natural generalization of this foundational result to systems with control inputs.


In particular, we provide a geometric characterization of fixed-time stabilizable systems, both in the mode-dependent and mode-independent settings, i.e., when either the feedback controller depends on the current-mode\footnote{In this set-up, the switching signal is still external/unmodifiable, however, observable/measurable.} or not. Moreover, we show that fixed-time stabilization can be achieved using linear feedback controllers, despite the fact that such controllers are not in general sufficient for stabilization under arbitrary switching, as previously discussed.
Additionally, we present a normal form decomposition showing that any switched system can be split into two subsystems: one that is fixed-time stable and another of reduced order that admits no non-trivial fixed-time stabilizable subspaces, while preserving the same minimal growth rate as the original system. Therefore, we provide a simpler way to analyze the \emph{minimal growth rate} of (1) through a reduced-order system, thus decreasing the overall complexity of this analysis.

The paper is organized as follows. Section 2 introduces the necessary definitions and background results on switched systems. Section 3 presents the geometric characterization of fixed-time stabilizable systems. Section 4 discusses our results on arbitrarily fast convergence. Section 5 provides a comprehensive numerical example, while Section 6 concludes the paper and outlines possible directions for future research.
\textbf{Notation:} We denote by $\N$ the set of natural numbers including $\{0\}$. Given a vector $x\in\R^n$, with $||x||$ we denote the Euclidean norm (a.k.a. $2$ norm).
 We denote by $\mathrm{Im}(A)$ the image (column space) of a matrix $A$. The preimage of a set $\mathcal{V}$ by an application $A$ is given by $A^{-1}\mathcal{V} := \{y \in \R^n \mid Ay \in \mathcal{V}  \}$. 
\section{Preliminaries}\label{sec:Prel}
We begin by introducing the main definitions and technical results that will be used throughout the paper.
\begin{defn}
System (\ref{eq:linSS}) is said to be  
\begin{enumerate}    
\item \textit{Mode-Dependent Feedback Stabilizable (MDFS)} if there exist functions $\varphi_1, \dots, \varphi_M: \mathbb{R}^n \to \mathbb{R}^m$ and constants $C > 0$ and $\rho \in [0,1)$
such that the solution of the closed-loop system
\begin{equation}
    x(t+1) = A_{\sigma(t)} x(t) + B_{\sigma(t)} \varphi_{\sigma(t)}(x(t)) \label{eq:MD-ClosedLoop}
\end{equation}
with initial condition $x(0) = x_0 \in \mathbb{R}^n$, and switching signal $\sigma\in \mathcal S$, evaluated at time $t \in \mathbb{N}$, denoted by $\Phi^D_{\sigma}(t, x_0)$, satisfies the exponential decay condition
\begin{equation}
\label{eq:Exp-MD}
    |\Phi^D_{\sigma}(t, x_0)| \leq C \rho^t |x_0|, \quad \forall  t \in \mathbb{N},\; \forall x_0 \in \R^n,\; \forall \sigma \in \mathcal{S}.
\end{equation}
\item \textit{Mode-Independent Feedback Stabilizable (MIFS)} if there exists a function $\varphi: \mathbb{R}^n \to \mathbb{R}^m$ and constants $C > 0$ and $\rho \in [0,1)$ such that the solution of the closed-loop system  
    \begin{equation}
        x(t+1) = A_{\sigma(t)} x(t) + B_{\sigma(t)} \varphi(x(t)) \label{eq:MI-ClosedLoop}
    \end{equation}
    with initial condition $x(0) = x_0 \in \mathbb{R}^n$ and switching signal $\sigma \in \mathcal S$, evaluated at time $t \in \mathbb{N}$, denoted by $\Phi^I_{\sigma}(t, x_0)$, satisfies the exponential decay condition
    \begin{equation}
\label{eq:Exp-MI}
    |\Phi^I_{\sigma}(t, x_0)| \leq C \rho^t |x_0|, \quad \forall  t \in \mathbb{N},\; \forall x_0 \in \R^n,\; \forall \sigma \in \mathcal{S}.
\end{equation}  
\end{enumerate}
\end{defn}
We now introduce fixed-time stabilizability, a stronger property requiring convergence to the origin in a fixed number of steps, uniformly over all switching sequences.
\begin{defn}
    System (\ref{eq:linSS}) is said to be 
    \begin{enumerate}    
    \item \textit{Mode-Dependent Fixed-Time Stabilizable (MDFTS)}
    if there exist $\varphi_1,\dots,\varphi_M:\mathbb{R}^n \rightarrow \mathbb{R}^m$ and a time $T \in \mathbb{N}$ such that for all initial conditions $x_0 \in \mathbb{R}^n$ and for all switching signals $\sigma \in \mathcal{S}$, the solution of the closed-loop system (\ref{eq:MD-ClosedLoop}) satisfies \begin{equation*}  
   \Phi_{\sigma}^D(t, x_0)=0, \quad \forall t \geq T.
     \end{equation*}
    
    \item \textit{Mode-Independent Fixed-Time Stabilizable (MIFTS)} if there exists a feedback map $\varphi: \mathbb{R}^n \rightarrow \mathbb{R}^m$ and a time $T \in \mathbb{N}$ such that for all initial conditions $x_0 \in \mathbb{R}^n$ and for all switching signals $\sigma \in \mathcal{S}$, the solution of the closed-loop system (\ref{eq:MI-ClosedLoop}) satisfies
   \begin{equation*}  
   \Phi_{\sigma}^I(t, x_0)=0, \quad \forall t \geq T.
     \end{equation*}
\end{enumerate}
\end{defn}
Along similar lines as in \cite{HU2024,hu2017resilient}, we define a quantitative tool to measure how fast stabilization can be achieved under feedback control.
\begin{defn}
The constant $\rho \in [0,\infty)$ is referred to as an \textit{attainable growth rate} of (\ref{eq:linSS}) 
\begin{enumerate}
    \item by a mode-dependent feedback controller if there exist functions $\varphi_1,\dots,\varphi_M: \mathbb{R}^n \rightarrow \mathbb{R}^m$ and a constant $C > 0$ such that \eqref{eq:Exp-MD} holds.

    \item by a mode-independent feedback controller if there exists a function $\varphi: \mathbb{R}^n \rightarrow \mathbb{R}^m$ and a constant $C >0$ such that \eqref{eq:Exp-MI} holds.
\end{enumerate}

The \textit{minimal growth rate} of system (\ref{eq:linSS}) under mode-independent (respectively, mode-dependent) feedback control, denoted by $\rho^I_*$ (respectively, $\rho^D_*$), is the infimum of all growth rates $\rho$ attainable with mode-independent (respectively, mode-dependent) feedback control. In particular, following the formal definitions in \cite{HU2024,hu2017resilient}, system~\eqref{eq:linSS} is mode-dependent (respectively, mode-independent) feedback stabilizable if and only if~$\rho^I_<1$ (respectively,~$\rho^D_<1$).

We say that system (\ref{eq:linSS}) admits \textit{arbitrarily fast} mode-independent (respectively, mode-dependent) stabilization if $\rho^I_* = 0$ (respectively, $\rho^D_* = 0$). In this case, exponential convergence can be achieved with an arbitrarily small rate $\rho > 0$, and we say that the system has \textit{arbitrary convergence rate}.
\end{defn}

\begin{rem} \label{rem:DynContr}
Without loss of generality, we restrict our analysis to static state-feedback controllers.  
For the class of switched linear systems considered here, \cite{lima2025} shows that static and dynamic stabilizability are equivalent.
Moreover, by \cite[Lemma~2.1]{HU2024}, a convergence rate $\rho$ is achievable if and only if the system with scaled matrices $\frac{A_j}{\rho}$, $j \in \mathcal{M}$, is stabilizable. 
This immediately implies that any rate attainable by dynamic feedback is also attainable by static feedback.  
Finally, although the feedback need not be linear in general, it can always be chosen homogeneous, see \cite{lima2025}.

\end{rem}

\section{Fixed-Time Stabilization} \label{sec:FTS}
In this section, we characterize the class of systems that are fixed-time stabilizable and provide feedback controllers that guarantee this property. For clarity, we treat the mode-dependent and mode-independent cases separately in the following subsections.
\subsection{Mode-Dependent Case}

Consider system~\eqref{eq:linSS}. To characterize the systems that are MDFTS, we introduce the recursively defined sequence \( \{E_k\} \). To start, we set $E_0 = \{0\}$. For all $k \ge 0$:
\begin{equation} 
    E_{k+1}= \bigcap_{j=1}^MA_j^{-1}(E_k+\mathrm{Im}(B_j)) \label{eq:algoMD}
\end{equation}
The sets defined by~\eqref{eq:algoMD} can be interpreted as follows: for each $k$, $E_{k+1}$ is a set of states $x$ such that, for all $j \in \mathcal{M}$ (i.e., for any active mode), there exists a control input $u_j$ satisfying $A_j x + B_j u_j \in E_k$. In other words, $E_{k+1}$ consists of states that can be driven into $E_k$ in exactly one time step, regardless of the active mode. We next highlight two key properties of this sequence.
\begin{fact} \label{fact:1}
    The sequence \( \{E_k\} \) defined in \eqref{eq:algoMD} is non-decreasing, i.e.,  $E_k \subseteq E_{k+1}$ for all $k \geq 0$.
\end{fact}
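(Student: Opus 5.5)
We argue by induction on $k$, using only the monotonicity of the operator defining the recursion \eqref{eq:algoMD}. Define, for a subspace $\mathcal{V}\subseteq\R^n$,
\[
F(\mathcal{V}) := \bigcap_{j=1}^M A_j^{-1}\bigl(\mathcal{V}+\mathrm{Im}(B_j)\bigr),
\]
so that $E_{k+1}=F(E_k)$. The first step is to observe that $F$ is monotone with respect to inclusion: if $\mathcal{V}\subseteq\mathcal{W}$, then $\mathcal{V}+\mathrm{Im}(B_j)\subseteq\mathcal{W}+\mathrm{Im}(B_j)$ for every $j$. Preimages under the linear maps $A_j$ preserve inclusion, and so does intersection over $j$. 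Hence $F(\mathcal{V})\subseteq F(\mathcal{W})$.

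For the base case $k=0$, we need $E_0=\{0\}\subseteq E_1$. This holds trivially, since $E_1$ is an intersection of preimages of subspaces under linear maps. It is therefore itself a subspace and contains $0$. Concretely, $A_j 0 = 0\in E_0+\mathrm{Im}(B_j)$ for every $j$.

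For the inductive step, assume $E_{k-1}\subseteq E_k$. Applying the monotone map $F$ gives $E_k=F(E_{k-1})\subseteq F(E_k)=E_{k+1}$, which closes the induction.

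None of these steps poses a real obstacle. The only point deserving an explicit line is the monotonicity of $F$, and that follows directly from the definition of the preimage: $x\in A_j^{-1}(\mathcal{V}+\mathrm{Im}(B_j))$ means $A_jx\in\mathcal{V}+\mathrm{Im}(B_j)\subseteq\mathcal{W}+\mathrm{Im}(B_j)$.

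As a remark useful for what follows, the same argument shows that each $E_k$ is a linear subspace. Since the chain is non-decreasing in the finite-dimensional space $\R^n$, it must stabilize after at most $n$ strict inclusions.
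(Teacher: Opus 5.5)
Your proof is correct and follows essentially the same approach as the paper: the base case $E_0=\{0\}\subseteq E_1$ is trivial, and the inductive step uses that adding $\mathrm{Im}(B_j)$, taking preimages under $A_j$, and intersecting over $j$ all preserve inclusion. Packaging these three steps as monotonicity of the operator $F$ is just a tidier presentation of the paper's inclusion chain.
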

\begin{proof}
    The inclusion $E_0 \subseteq E_1$ is trivial. 
Let us now assume that for some $k \geq 1$, we have $E_{k-1} \subseteq E_{k}$. This clearly implies $E_{k-1} +\mathrm{Im}(B_j)\subseteq E_{k} +\mathrm{Im}(B_j), \text{for all} \ j \in \mathcal{M}$. Next, by applying the intersection of the pre-images by $A_j$ to both sides of this expression, we obtain
\begin{equation*}
      E_k =\bigcap_{j=1}^M A_j^{-1}\left(E_{k-1}+\mathrm{Im}(B_j)\right)\subseteq \bigcap_{j=1}^M A_j^{-1}\left(E_{k}+\mathrm{Im}(B_j)\right) = E_{k+1}
\end{equation*} 
where the equalities hold by definition of the sets $E_k$. Therefore, $E_k \subseteq E_{k+1}$ and, by induction, the statement holds for all $k \geq 0$.
\end{proof}

\begin{fact} \label{fact:2}
    The sequence \( \{ E_k \} \) defined in \eqref{eq:algoMD}  reaches a fixed point, i.e., there exists \( p \in \mathbb{N} \), with $p \leq n$, such that $E_p = E_{p+1}$. Moreover, $E_k = E_p$ for all $k \geq p$.
\end{fact}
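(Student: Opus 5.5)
The plan is a dimension-counting argument built on Fact~\ref{fact:1}, together with the observation that the recursion~\eqref{eq:algoMD} is a deterministic map applied to the previous iterate.

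First I will check that every $E_k$ is a linear subspace of $\R^n$. This goes by induction: $E_0=\{0\}$ is a subspace, and if $E_k$ is a subspace then so is each $E_k+\mathrm{Im}(B_j)$. Preimages of subspaces under linear maps are subspaces, and finite intersections of subspaces are subspaces, so $E_{k+1}$ is a subspace.

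Fact~\ref{fact:1} gives the chain $E_0\subseteq E_1\subseteq\cdots$, hence $0=\dim E_0\le \dim E_1\le\cdots\le n$. If $E_k\subsetneq E_{k+1}$ for all $k=0,\dots,n$, the dimension would increase by at least one at each of $n+1$ steps, which would give $\dim E_{n+1}\ge n+1$, a contradiction. So there is a smallest $p\le n$ with $\dim E_p=\dim E_{p+1}$. Because $E_p\subseteq E_{p+1}$ and the two subspaces have the same dimension, $E_p=E_{p+1}$.

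For the second claim, write $E_{k+1}=F(E_k)$, where $F(V):=\bigcap_{j=1}^M A_j^{-1}(V+\mathrm{Im}(B_j))$ depends only on $V$. Then $E_{p+1}=E_p$ gives $E_{p+2}=F(E_{p+1})=F(E_p)=E_{p+1}=E_p$, and an induction on $k\ge p$ shows $E_k=E_p$ for all $k\ge p$.

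There is no real obstacle here. The only point needing care is confirming that the $E_k$ are subspaces, since that is what lets equality of dimensions together with inclusion force equality of the sets.
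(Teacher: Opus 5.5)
Your proof is correct and follows the same route as the paper: the iterates are nested subspaces of $\R^n$ by Fact~\ref{fact:1}, so dimension counting forces $E_p=E_{p+1}$ for some $p\le n$, and the recursion then keeps the sequence constant. You also supply details the paper only asserts, namely that each $E_k$ is a subspace, the explicit count giving $p\le n$, and the fact that $E_{k+1}$ depends only on $E_k$.
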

\begin{proof}
    Since the sequence \( \{E_k\} \) is defined recursively in a finite-dimensional state space \( \mathbb{R}^n \), and each set \( E_k \) is a subspace satisfying \( E_k \subseteq E_{k+1} \) by construction, the sequence must eventually reach a fixed point. 
    In particular, there exists an integer \( p \in \mathbb{N} \), with \(p \leq n \), such that \( E_p = E_{p+1} \).
    Moreover, this equality implies that the sequence becomes constant from step \( p \) onward, i.e., \( E_k = E_p \) for all \( k \geq p \). 
\end{proof}

In what follows, we define some matrices and vectors associated with the sequence $\{E_k\}$ which will be used in the subsequent statements. Let us start by defining $Q_1=\begin{bmatrix}
    q_1&\dots&q_{e_1}
\end{bmatrix}$ as a matrix whose columns form a basis of $E_1$. 
For all $1 \leq i \leq e_{1}$, for all $j \in \mathcal{M}$, there exists $u_{i,j}$ such that $A_jq_i + B_j u_{i,j} \in E_0$.
Let us now define $U_{1,j}=\begin{bmatrix}
    u_{1,j},\dots,u_{e_1,j}
\end{bmatrix}$, the matrix of control inputs associated to $Q_1$. 
Then, let $p \in \mathbb{N}$ be such that $E_p$ is the fixed point of the sequence $ \{E_k\}$. For all $1 \leq k \leq p-1$, we construct the basis matrix $Q_{k+1}$ of $E_{k+1}$ by extending the basis $Q_k=\begin{bmatrix}
    q_1 & \dots & q_{e_k}
\end{bmatrix}$ of $E_k$ with a completion $Q_{k+1}'=\begin{bmatrix}
    q_{e_k+1} & \dots & q_{e_{k+1}}
\end{bmatrix}$ as follows: $Q_{k+1}= \begin{bmatrix}
    Q_k & Q_{k+1}'
\end{bmatrix}$. 
For all $e_k+1 \leq i \leq e_{k+1}$, for all $j \in \mathcal{M}$, there exists $u_{i,j}$ such that $A_jq_i + B_j u_{i,j} \in E_k$. Thus, we can define the matrices of control inputs corresponding to $Q_{k+1}$ as $U_{k+1,j}=\begin{bmatrix}
   U_{k,j} & U_{k+1,j}'
\end{bmatrix}$, with $U_{k+1,j}' = \begin{bmatrix}
     u_{e_{k}+1,j} & \dots & u_{e_{k+1},j}
\end{bmatrix}$. 
To prepare for the next result, we first illustrate how the gain of the mode-dependent linear feedback controller can be constructed from the matrices defined above.
For all \(0 \leq k \leq p-1\) and for all \(j \in \mathcal{M}\), it holds that
\begin{equation*}
A_jQ_{k+1}' + B_jU_{k+1,j}' = Q_k Y_{k+1,j},
\end{equation*}
which is equivalent to
\begin{equation} 
A_jQ_{k+1}' = \begin{bmatrix}-B_j & Q_k\end{bmatrix} 
\begin{bmatrix} U_{k+1,j}' \\ Y_{k+1,j} \end{bmatrix}, 
\label{eq:lin_system}
\end{equation}
where \(Y_{k+1,j}\) is an auxiliary matrix.
A solution to system~\eqref{eq:lin_system} exists by construction. 
If multiple solutions exist, one can, for example, choose the one with minimum norm.

Next, consider \(Q_p\) and \(U_{p,j}\). We define the feedback gains \(K_j\) by
\[
K_j Q_p = U_{p,j},
\]
where a solution is given by
\[
K_j = U_{p,j} Q_p^+, \quad \text{with } Q_p^+ Q_p = I_{e_p},
\]
which explicitly yields
\[
K_j = U_{p,j} (Q_p^\top Q_p)^{-1} Q_p^\top.
\]
In the special case where \(E_p = \mathbb{R}^n\) (i.e., \(\mathrm{rank}(Q_p) = n\)), this reduces to \(K_j = U_{p,j} Q_p^{-1}\).

The next result establishes that membership of a state in the subspace \(E_k\) is equivalent to the capability of steering it to the origin in \(k\) steps using a mode-dependent controller, regardless of the switching sequence.

\begin{lemma} \label{lemma:LemmaMD}
Let \{$E_k$\} be the sequence defined in~\eqref{eq:algoMD} and let $p \in \mathbb{N}$ be such that $E_p=E_{p+1}$. Let $Q_p$ and $U_{p,j}$ be defined as above. The following statements are equivalent, for all $k=0,\dots,p-1$:
\begin{enumerate}[label=(\roman*)]
    \item $x_0 \in E_k$,
    \item There exist feedback functions \( \varphi_1, \dots, \varphi_M: \mathbb{R}^n \to \mathbb{R}^m \) such that for all \( \sigma \in \mathcal{S} \), \( \Phi_{\sigma}^D(k,x_0) = 0 \),
        \item There exists a mode-dependent linear feedback law of the form \( \varphi_j(x) = K_j x \), for all \( j \in \mathcal{M} \), such that for all \( \sigma \in \mathcal{S} \), \( \Phi_{\sigma}^D(k,x_0) = 0 \). 
        
        Moreover, we can choose \( K_j = U_{p,j} (Q_p^{\top} Q_p)^{-1} Q_p^{\top} \), and if $E_p=\mathbb{R}^n$, $K_j=U_{p,j}Q_p^{-1}$.
\end{enumerate}
\end{lemma}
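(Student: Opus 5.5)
I will prove the cycle of implications (iii)$\Rightarrow$(ii)$\Rightarrow$(i)$\Rightarrow$(iii). The first implication is immediate, since a linear law $\varphi_j(x)=K_jx$ is a particular choice of feedback functions.

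For (ii)$\Rightarrow$(i) I will argue by induction on $k$ that the set of states that can be steered to the origin in $k$ steps, uniformly over $\sigma\in\mathcal S$, is contained in $E_k$. For $k=0$, the condition $\Phi^D_\sigma(0,x_0)=x_0=0$ gives $x_0\in E_0=\{0\}$. For the inductive step, suppose $\varphi_1,\dots,\varphi_M$ drive $x_0$ to $0$ in $k+1$ steps for every $\sigma$. Fix $j\in\mathcal M$ and set $x_1^j:=A_jx_0+B_j\varphi_j(x_0)$. Every switching sequence starting with $j$ has the form $\sigma=(j,\sigma')$ with $\sigma'$ arbitrary in $\mathcal S$, and along it the state $x_1^j$ is steered to $0$ in $k$ steps by the same feedbacks, for every $\sigma'$. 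The closed loop is time-invariant, so the trajectory from $x_1^j$ under $\sigma'$ is exactly $\Phi^D_{\sigma'}(\cdot,x_1^j)$; the shift is harmless. By the induction hypothesis $x_1^j\in E_k$, hence $A_jx_0\in E_k+\mathrm{Im}(B_j)$. Since this holds for every $j$, we get $x_0\in\bigcap_jA_j^{-1}(E_k+\mathrm{Im}(B_j))=E_{k+1}$. This step uses only the recursion~\eqref{eq:algoMD} and does not need $k\le p-1$.

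The direction (i)$\Rightarrow$(iii) carries the content. I will show that with $K_j=U_{p,j}Q_p^+$ we have $(A_j+B_jK_j)E_{k+1}\subseteq E_k$ for all $j$ and all $0\le k\le p-1$. Since $Q_p^+Q_p=I$, we get $K_jq_i=u_{i,j}$ for every basis column $q_i$ of $Q_p$. For $q_i$ in the block $Q'_{k'+1}$ with $k'\le k$, the construction gives $(A_j+B_jK_j)q_i=A_jq_i+B_ju_{i,j}\in E_{k'}\subseteq E_k$, where the inclusion is Fact~\ref{fact:1}. The columns of $Q_{k+1}=[Q'_1\ \cdots\ Q'_{k+1}]$ span $E_{k+1}$, with $Q'_1:=Q_1$. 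Linearity then gives the inclusion on all of $E_{k+1}$. Iterating along any $\sigma$, $x_0\in E_k$ implies $x(1)\in E_{k-1},\dots,x(k)\in E_0=\{0\}$, so $\Phi^D_\sigma(k,x_0)=0$.

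The only delicate point is that $K_j$ is defined on all of $\mathbb R^n$ through the pseudo-inverse rather than only on $E_p$. This is harmless because we only evaluate it on $E_p$, where $Q_p^+$ correctly recovers coordinates: for $x=Q_pc$ we have $K_jx=U_{p,j}c$. When $E_p=\mathbb R^n$, $Q_p$ is square and invertible, so $Q_p^+=Q_p^{-1}$. I also need the $u_{i,j}$ to exist, which holds by the definition of $E_{k+1}$. I expect the main care to lie in this bookkeeping of indices between blocks, not in any real obstacle.
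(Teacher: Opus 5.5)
Your proposal is correct and follows essentially the paper's proof. It uses the same cycle (iii)$\Rightarrow$(ii)$\Rightarrow$(i)$\Rightarrow$(iii), with (ii)$\Rightarrow$(i) by induction on $k$ via the recursion~\eqref{eq:algoMD}, and (i)$\Rightarrow$(iii) by checking that $K_j=U_{p,j}Q_p^{+}$ gives $(A_j+B_jK_j)E_{k+1}\subseteq E_k$ blockwise; you are slightly more careful than the paper in making explicit the time-invariance shift, the use of Fact~\ref{fact:1} for the earlier blocks $Q'_{k'+1}$ with $k'<k$, and the correct step count ($x_0\in E_k$ reaches $0$ in $k$ steps, where the paper's text is off by one).
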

\begin{proof}

$(ii) \Rightarrow (i)$. $\Phi_{\sigma}^D(0,x_0)=0$ implies trivially that $x_0 \in E_0$. Assume that if there exist $\varphi_1^k,\dots,\varphi_M^k$ such that for all $\sigma \in \mathcal{S}, \Phi_{\sigma}^D(k,x_0)=0$, then $x_0 \in E_k$. If there exist $\varphi_1^{k+1},\dots,\varphi_M^{k+1}$ such that for all $\sigma \in \mathcal{S}, \Phi_{\sigma}^D(k+1,x_0)=0$, it means that, for all $j \in \mathcal{M}$, there exists $u_j \in \mathbb{R}^m$ such that $A_jx_0+B_ju_j \in E_k$, which is equivalent to $x_0 \in \bigcap_{j=1}^M A_j^{-1}(E_k+\mathrm{Im}(B_j))$, that is $E_{k+1}$ by definition. 

$(i) \Rightarrow (iii)$. 
Consider \(q_i\) and the associated \(u_{i,j}\), for \(1 \leq i \leq e_{k+1}\), as defined above Lemma~\ref{lemma:LemmaMD}. By definition, for each \(q_i \in E_{k+1}\) and every \(j \in \mathcal{M}\), $A_j q_i + B_j u_{i,j} \in E_k$.
Since the vectors \(q_i\) are linearly independent, for each $j \in \mathcal{M}$, we can define \(K_j\) so that \(K_j q_i = u_{i,j}\) for all \(i=1,\dots,e_{k+1}\). By construction, the matrices \(Q_{k+1}'\) and \(U_{k+1,j}'\) are subblocks of \(Q_p\) and \(U_{p,j}\), respectively. Hence the explicit choice $K_j = U_{p,j}(Q_p^\top Q_p)^{-1}Q_p^\top$ satisfies \(K_j Q_{k+1}' = U_{k+1,j}'\) for every \(k=0,\dots,p-1\). Consequently, for all \(x_0 \in E_{k+1}\) and all \(j \in \mathcal{M}\), $(A_j + B_j K_j)x_0 \in E_k.$
Since each feedback matrix maps 
\(E_{k+1}\) into \(E_k\), iterating this property ensures that, for any 
\(x_0 \in E_{k+1}\) and any switching sequence \(\sigma \in \mathcal{S}\), 
the state reaches the origin in at most \(k\) steps, i.e., 
\(\Phi_\sigma^D(k, x_0) = 0\).

$(iii) \Rightarrow (ii)$ follows directly, since (ii) accounts for general nonlinear feedback, while (iii) represents the specific case where the feedback is linear.
\end{proof}
Building on the previous developments, we present the main result of this section. Clearly, if the fixed point \( E_p \) of the sequence \(\{E_k\}\) coincides with the entire state space \(\mathbb{R}^n\), then the feedback controller constructed in Lemma~\ref{lemma:LemmaMD} steers any initial condition to the origin in at most \(p\) steps, regardless of the switching sequence. The following theorem formalizes this equivalence.

\begin{thm} \label{thm:MDFTS}
    Let $p \in \mathbb{N}$, and let $E_p$ be the fixed point of the sequence defined in (\ref{eq:algoMD}).
    System \eqref{eq:linSS} is MDFTS if and only if $E_p = \mathbb{R}^n$.\\
 Moreover, a mode-dependent feedback controller achieving Mode-Dependent Fixed-Time Stabilizability can be chosen linear, and is given by $u(t)=K_{\sigma(t)}x(t)$, with $K_j=U_{p,j}Q_p^{-1}, j \in \mathcal{M}$.
\end{thm}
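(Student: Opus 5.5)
The plan is to prove the two implications separately, relying on Lemma~\ref{lemma:LemmaMD} and Facts~\ref{fact:1}--\ref{fact:2}. There is one subtlety: the lemma is stated only for $k=0,\dots,p-1$, whereas the theorem needs the analogous statement at $k=p$ and at an arbitrary horizon $T$.

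\emph{Sufficiency.} Assume $E_p=\mathbb{R}^n$. Then $Q_p$ is square and invertible, so $K_j=U_{p,j}Q_p^{-1}$ is well defined. By construction, $K_jQ_{k+1}'=U_{k+1,j}'$ for every $k=0,\dots,p-1$. Since $A_jQ_{k+1}'+B_jU_{k+1,j}'=Q_kY_{k+1,j}$, the closed-loop matrix satisfies $(A_j+B_jK_j)E_{k+1}\subseteq E_k$ for all $j$. This holds at every level $k+1\le p$, because it only uses the basis columns, so nothing is lost by the lemma's restriction $k\le p-1$. Iterating along any $\sigma\in\mathcal S$ gives
\[
\Phi^D_\sigma(t,x_0)\in E_{p-t}\quad (t\le p),
\]
so that $\Phi^D_\sigma(p,x_0)\in E_0=\{0\}$. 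Because $0$ is an equilibrium of the linear closed loop, the state stays at the origin for all $t\ge p$. This gives MDFTS with $T=p$ and the stated linear gains.

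\emph{Necessity.} Assume the system is MDFTS with some feedbacks $\varphi_j$ and time $T$. I would reuse the induction argument from the $(ii)\Rightarrow(i)$ part of Lemma~\ref{lemma:LemmaMD}, which places no upper bound on $k$. The claim is that if $\Phi^D_\sigma(k,x_0)=0$ for all $\sigma$, then $x_0\in E_k$, for every $k\in\mathbb N$. The step goes as follows. Fix $j$ and set $u_j=\varphi_j(x_0)$. The successor $x_1=A_jx_0+B_ju_j$ reaches $0$ in $k$ further steps under every continuation of the switching sequence, so by the induction hypothesis $x_1\in E_k$. Hence $x_0\in\bigcap_j A_j^{-1}(E_k+\mathrm{Im}B_j)=E_{k+1}$.

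Applying this with $k=T$ shows that every $x_0\in\mathbb{R}^n$ lies in $E_T$. By Facts~\ref{fact:1}--\ref{fact:2}, $E_T\subseteq E_{\max(T,p)}=E_p$, so $E_p=\mathbb{R}^n$.

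The main obstacle is technical rather than conceptual. Lemma~\ref{lemma:LemmaMD} is stated only for $k\le p-1$, so I must make sure that both directions go through for $k=p$ in sufficiency and for arbitrary $T$ in necessity. The fix is to redo the short induction for all $k$ rather than cite the lemma verbatim. The induction step also needs time-invariance of the closed loop: the state reached after one step must itself be driven to zero in $k$ steps under every shifted switching sequence.
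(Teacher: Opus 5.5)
Your proof is correct and follows the same route as the paper: sufficiency via the gains $K_j=U_{p,j}Q_p^{-1}$ mapping $E_{k+1}$ into $E_k$, and necessity via the induction from the $(ii)\Rightarrow(i)$ step of Lemma~\ref{lemma:LemmaMD}. Extending that induction to every $k$ and then using $E_T\subseteq E_p$ is more careful than the paper, whose necessity argument simply takes the fixed-time horizon to be $T=p$ without justification.
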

\begin{proof}
    \textbf{Sufficiency}. Assume \( E_p = \mathbb{R}^n \). Consider the control law \( u(t) = K_{\sigma(t)}x(t) \). Defining $K_j$, for all $j \in \mathcal{M}$, as shown in the proof of Lemma \ref{lemma:LemmaMD}, it follows that for all initial conditions $x_0 \in E_p$ we reach the origin in $p$ time steps . Since $E_p=\mathbb{R}^n$ the result follows.
    
 \textbf{Necessity}. Assuming that the system is MDFTS with $T=p$, it follows straightfowardly from Lemma \ref{lemma:LemmaMD} that $E_p=\mathbb{R}^n$.  
\end{proof}

\subsection{Mode-Independent case}

Consider the discrete-time switched linear control system \eqref{eq:linSS}. To characterize fixed-time stabilizability in the mode-independent case, we first recursively define a sequence of sets $\{E_k\}$, similarly to the mode-dependent case. Set $E_0=\{0\}$. For all $k \geq 0$:
\begin{equation} \label{eq:algoMI}
 E_{k+1}=\bar A^{-1}(E_k^M+\mathrm{Im(\bar B})),\end{equation} 
with 
$$\bar A=\begin{pmatrix}
    A_1\\
    \vdots \\ 
    A_M
\end{pmatrix} \text{ and } \bar B=\begin{pmatrix}
    B_1 \\ 
    \vdots \\ 
    B_M
\end{pmatrix}$$
The expression above follows the same idea as in the mode-dependent case, and it translates to the existence of a control input such that for all modes, a state belonging to $E_{k+1}$ is driven to $E_k$ in one step, that is, given $x \in E_{k+1}$, there exists $u,  \text{such that for all} \ j \in \mathcal{M}, A_jx + B_ju \in E_k$.

We now present the analogues of Facts \ref{fact:1} and \ref{fact:2} for the mode-independent case. Their proofs are omitted, as they follow the same lines as the proofs of Facts \ref{fact:1} and \ref{fact:2}.
\begin{fact} 
    The sequence \( \{E_k\} \) defined in (\ref{eq:algoMI}) is non-decreasing, i.e.,  $    E_k \subseteq E_{k+1}$ for all $k \geq 0$.

\end{fact}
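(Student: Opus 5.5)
The plan is to argue by induction on $k$, mirroring the proof of Fact~\ref{fact:1}; the only new ingredient is to check that the map $E \mapsto \bar A^{-1}(E^M+\mathrm{Im}(\bar B))$ is monotone with respect to inclusion. Here $E_k^M$ denotes the Cartesian product $E_k\times\cdots\times E_k\subseteq\mathbb{R}^{nM}$. I will use the following characterization, which follows directly from the definition of the preimage:
\[
x\in E_{k+1} \iff \exists\, u\in\mathbb{R}^m \text{ such that } A_jx+B_ju\in E_k \ \text{ for all } j\in\mathcal{M}.
\]

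\textbf{Base case.} Since $E_0=\{0\}$ and $E_1$ is a preimage of a subspace, hence a subspace, we have $0\in E_1$. Concretely, $\bar A\cdot 0=0=\bar B\cdot 0\in E_0^M+\mathrm{Im}(\bar B)$. Thus $E_0\subseteq E_1$.

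\textbf{Inductive step.} Assume $E_{k-1}\subseteq E_k$ for some $k\ge 1$. Taking $M$-fold products preserves inclusion, so $E_{k-1}^M\subseteq E_k^M$, and therefore
\[
E_{k-1}^M+\mathrm{Im}(\bar B)\subseteq E_k^M+\mathrm{Im}(\bar B).
\]
Preimages under the linear map $\bar A:\mathbb{R}^n\to\mathbb{R}^{nM}$ preserve inclusion. Applying $\bar A^{-1}$ to both sides gives $E_k\subseteq E_{k+1}$ by the definition~\eqref{eq:algoMI}. Equivalently, in terms of the characterization above: if $x\in E_k$, there is a single $u$ with $A_jx+B_ju\in E_{k-1}\subseteq E_k$ for all $j$, so $x\in E_{k+1}$.

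Induction then yields $E_k\subseteq E_{k+1}$ for all $k\ge0$. I do not expect any real obstacle. The only point needing care is that the mode-independence requirement (one common $u$ for all modes) is encoded in the stacked $\bar B$. It is unaffected by the inclusion argument, because the same $u$ witnessing membership at step $k$ also witnesses membership at step $k+1$.
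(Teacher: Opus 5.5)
Your proof is correct and follows the route the paper intends. The paper omits this proof and refers back to the induction in Fact~\ref{fact:1}; your argument is that same induction, with $E_k+\mathrm{Im}(B_j)$ replaced by $E_k^M+\mathrm{Im}(\bar B)$, using that sums and preimages under $\bar A$ preserve inclusion.
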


\begin{fact}
    The sequence \( \{ E_k \} \) defined in \eqref{eq:algoMI}  reaches a fixed point, i.e., there exists \( p \in \mathbb{N} \), with $p \leq n$, such that $E_p = E_{p+1}$. Moreover, $E_k = E_p$ for all $k \geq p$.
    
\end{fact}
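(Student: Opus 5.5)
The plan is to follow the proof of Fact~\ref{fact:2} almost verbatim, with the mode-independent sequence~\eqref{eq:algoMI} in place of~\eqref{eq:algoMD}, after first checking that each $E_k$ is a linear subspace of $\mathbb{R}^n$. I would argue this by induction. $E_0=\{0\}$ is a subspace. If $E_k$ is a subspace, then $E_k^M=E_k\times\cdots\times E_k\subseteq\mathbb{R}^{nM}$ is a subspace, and so is $E_k^M+\mathrm{Im}(\bar B)$, being a sum of two subspaces. The preimage of a subspace under the linear map $\bar A:\mathbb{R}^n\to\mathbb{R}^{nM}$ is a subspace, so $E_{k+1}$ is a subspace.

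Next I would use the preceding Fact (monotonicity) to get the chain $E_0\subseteq E_1\subseteq\cdots$ of subspaces of $\mathbb{R}^n$. The dimensions $\dim E_k$ are then non-decreasing integers in $\{0,\dots,n\}$. If $E_k\neq E_{k+1}$ for every $k=0,\dots,n$, the dimension would increase strictly at each of these $n+1$ steps, giving $\dim E_{n+1}\ge n+1$, which is impossible. Hence there is a first index $p\le n$ with $E_p=E_{p+1}$.

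Finally, for stationarity I would observe that the recursion is a fixed map, $E_{k+1}=F(E_k)$ with $F(V):=\bar A^{-1}(V^M+\mathrm{Im}(\bar B))$. Since $E_{p+1}$ is obtained from $E_p$ by $F$, the equality $E_p=E_{p+1}$ gives $E_{p+2}=F(E_{p+1})=F(E_p)=E_{p+1}$. Induction then yields $E_k=E_p$ for all $k\ge p$.

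The only point needing slightly more care than in the mode-dependent case is the subspace property of $E_k^M+\mathrm{Im}(\bar B)$, since the stacked formulation mixes the modes through a common input $u$. Even so, it is just a sum of subspaces of $\mathbb{R}^{nM}$, so I expect no real obstacle.
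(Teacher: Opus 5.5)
Your proposal is correct and follows the route the paper intends. The paper omits this proof and refers back to Fact~\ref{fact:2}: a non-decreasing chain of subspaces of $\mathbb{R}^n$ must stabilize within $n$ steps and then stays constant. You also spell out what the paper leaves implicit: that each $E_k$ is a subspace, the dimension count giving $p\le n$, and stationarity via the fixed map $F$. Each of these steps is sound.
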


Now, let $p \in \mathbb{N}$ be such that $E_p$ is the fixed point of the sequence $\{E_k\}$. For all $0 \leq p-1$, we can define the matrices $Q_{k+1}$ and $U_{k+1}$ as in the previous section. Set $Q_1=\begin{bmatrix}
    q_1&\dots&q_{e_1}
\end{bmatrix}$, with $q_1,\dots,q_{e_1}$ a basis of $E_1$. For all $1 \leq i \leq e_1$, there exists $u_i$ such that for all $j \in \mathcal{M}$, $A_jq_i+B_ju_i \in E_0$. Set $U_1=\begin{bmatrix}
    u_{1},\dots,u_{e_1}
\end{bmatrix}$, the matrix of control inputs associated to $Q_1$. Then, for all $1 \leq k \leq p-1$, the basis matrix of $E_{k+1}$ is $Q_{k+1}=\begin{bmatrix}
    Q_k & Q_{k+1}'
\end{bmatrix}$, with $Q_{k+1}'=\begin{bmatrix}
    q_{e_k+1} & \dots & q_{e_{k+1}}
\end{bmatrix}$. For all $e_k+1 \leq i \leq e_{k+1}$, there exists $u_i$ such that for all $j \in \mathcal{M}$, $A_jq_i+B_ju_i \in E_k$ thus, finally, we can define $U_{k+1}=\begin{bmatrix}
   U_{k} & U_{k+1}'
\end{bmatrix}$, with $U_{k+1}' = \begin{bmatrix}
     u_{e_{k}+1} & \dots & u_{e_{k+1}}
\end{bmatrix}$.

Moreover, for all $0 \leq k \leq p-1$, for all $j \in \mathcal{M}$,
\begin{equation*}
    \bar A Q_{k+1}' + \bar B U_{k+1}'= \operatorname{diag}_M(Q_k)
 Y_{k+1}
\end{equation*}
or, 
\begin{equation} \label{eq:lin_systemMI}
    \bar A Q_{k+1}'= 
    \begin{pmatrix}
        -\bar B & \operatorname{diag}_M(Q_k)
    \end{pmatrix} 
    \begin{pmatrix}
        U_{k+1}'\\ Y_{k+1}
    \end{pmatrix},
\end{equation} 
where $\operatorname{diag}_M(Q_k)$ denotes the block-diagonal matrix with $M$ copies of $Q_k$ on its diagonal, and $Y_{k+1}$ is an auxiliary matrix.  
A solution to system~\eqref{eq:lin_systemMI} exists by construction; if it is not unique, one can, for example, choose the solution with minimum norm.
 
Next, considering $Q_p$ and $U_p$, we define the feedback gain $K$ as
\[
K Q_p = U_p.
\]
A solution is then given by
\[
K = U_p Q_p^+, \quad \text{with } Q_p^+ Q_p = I_{e_p},
\]
which can be explicitly written as
\[
K = U_p (Q_p^\top Q_p)^{-1} Q_p^\top.
\]
In the special case where $E_p = \mathbb{R}^n$ (i.e., $\mathrm{rank}(Q_p) = n$), this reduces to
\[
K = U_p Q_p^{-1}.
\]
The next lemma establishes an analogous equivalence as in Lemma \ref{lemma:LemmaMD}, for the mode-independent case.
\begin{lemma} \label{LemmaMIequivalences}
    Let \{$E_k$\} be the sequence defined in (\ref{eq:algoMI}) and let $p \in \mathbb{N}$ be such that $E_p=E_{p+1}$. Let $Q_p$ and $U_p$ be defined as above. The following statements are equivalent, for all $k=0,\dots,p-1$: 
\begin{enumerate}[label=(\roman*)]
    \item $x_0 \in E_k$,
    \item There exists a feedback function \( \varphi: \mathbb{R}^n \to \mathbb{R}^m \) such that for all \( \sigma \in \mathcal{S} \), \( \Phi_{\sigma}^I(k,x_0) = 0 \),
        \item There exists a mode-independent linear feedback law of the form \( \varphi(x) = K x \), such that for all \( \sigma \in \mathcal{S} \), \( \Phi_{\sigma}^I(k,x_0) = 0 \). 
        
        Moreover, we can choose \( K = U_{p} (Q_p^{\top} Q_p)^{-1} Q_p^{\top} \), and if $E_p=\mathbb{R}^n$, $K=U_{p}Q_p^{-1}$.
\end{enumerate}
\end{lemma}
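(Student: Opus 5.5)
I will follow the structure of the proof of Lemma~\ref{lemma:LemmaMD}, proving the cycle $(ii)\Rightarrow(i)\Rightarrow(iii)\Rightarrow(ii)$. The only new ingredient is that the input must now be chosen once, before the mode is known, which is exactly what the stacked matrices $\bar A,\bar B$ in \eqref{eq:algoMI} encode. Concretely, $x\in E_{k+1}$ means $\bar A x\in E_k^M+\mathrm{Im}(\bar B)$. Equivalently, there is a single $u$ such that $A_jx+B_ju\in E_k$ for every $j\in\mathcal M$.

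For $(ii)\Rightarrow(i)$ I will argue by induction on $k$. The case $k=0$ is trivial, since $\Phi^I_\sigma(0,x_0)=x_0=0$ forces $x_0\in E_0$. For the inductive step, suppose a feedback $\varphi$ gives $\Phi^I_\sigma(k+1,x_0)=0$ for all $\sigma$, and set $u=\varphi(x_0)$. This $u$ does not depend on $\sigma(0)$.

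For each $j$, let $x_1^j:=A_jx_0+B_ju$. Every switching sequence starting from $x_1^j$ is realized as the tail of a sequence with $\sigma(0)=j$. Hence the same $\varphi$ steers $x_1^j$ to the origin in $k$ steps under all switchings. By the induction hypothesis (applied to the initial state $x_1^j$), $x_1^j\in E_k$ for every $j$. Therefore $\bar A x_0+\bar B u\in E_k^M$, i.e.\ $x_0\in E_{k+1}$.

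The point to state carefully is that the induction hypothesis must be phrased for arbitrary initial states and arbitrary feedbacks, not only the fixed $x_0$. I will state it that way.

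For $(i)\Rightarrow(iii)$ I will use the basis $Q_p$ and the inputs $U_p$ constructed above. By construction, each column $q_i$ of $Q'_{k+1}$ has an associated $u_i$ with $A_jq_i+B_ju_i\in E_k$ for all $j$. The columns of $Q_p$ are linearly independent, so $Q_p^\top Q_p$ is invertible. Hence $K=U_p(Q_p^\top Q_p)^{-1}Q_p^\top$ satisfies $KQ_p=U_p$, so $Kq_i=u_i$ for every $i$. By linearity, $(A_j+B_jK)E_{k+1}\subseteq E_k$ for all $j$ and all $k\le p-1$, with $E_0=\{0\}$.

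Iterating this inclusion along any $\sigma$ shows that $x_0\in E_k$ gives $\Phi^I_\sigma(k,x_0)=0$. If $E_p=\mathbb R^n$, then $Q_p$ is square and invertible, and the formula reduces to $K=U_pQ_p^{-1}$.

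Finally, $(iii)\Rightarrow(ii)$ is immediate, since linear feedback is a special case of general feedback. The main obstacle is the decoupling argument in $(ii)\Rightarrow(i)$: I must check that a single $u=\varphi(x_0)$ works for all modes simultaneously. This holds because the mode-independent closed loop \eqref{eq:MI-ClosedLoop} evaluates $\varphi$ only at the state. No other difficulty arises.
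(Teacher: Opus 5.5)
Your proposal is correct and follows the same cycle $(ii)\Rightarrow(i)\Rightarrow(iii)\Rightarrow(ii)$ as the paper. The paper gives no separate proof for this lemma and defers to the proof of Lemma~\ref{lemma:LemmaMD}; your version states the induction hypothesis more carefully, over arbitrary initial states, using time-invariance of the static feedback to pass to the tail trajectory. One small addition: the inclusion $(A_j+B_jK)E_{k+1}\subseteq E_k$ needs, besides linearity, the monotonicity $E_l\subseteq E_k$ for $l\le k$, because the columns of the earlier blocks $Q'_{l+1}$ are only mapped into $E_l$.
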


Having established the previous lemma, we now state the main result for the mode-independent case.
\begin{thm} \label{thm:MIFTS}
    Let $p \in \mathbb{N}$, and let $E_p$ be the fixed point of the sequence defined in \eqref{eq:algoMI}.
    System \eqref{eq:linSS} is MIFTS if and only if $E_p = \mathbb{R}^n$.\\
 Moreover, the mode-independent feedback controller achieving MIFTS is linear and time-invariant, and is given by $u(t)=Kx(t)$, with $K=U_pQ_p^{-1}$.
\end{thm}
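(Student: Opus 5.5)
The plan mirrors the proof of Theorem~\ref{thm:MDFTS}, with Lemma~\ref{LemmaMIequivalences} in place of Lemma~\ref{lemma:LemmaMD}. I treat sufficiency and necessity separately.

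\textbf{Sufficiency.} Assume $E_p=\mathbb{R}^n$. Then $Q_p$ is square and invertible, so $K=U_pQ_p^{-1}$ is well defined and satisfies $KQ_p=U_p$. In particular $Kq_i=u_i$ for every basis vector $q_i$.

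Fix $k\in\{0,\dots,p-1\}$ and a column $q_i$ of $Q_{k+1}'$. By construction, for every $j\in\mathcal M$ we have $(A_j+B_jK)q_i=A_jq_i+B_ju_i\in E_k$. Every column of $Q_k$ already lies in $E_{k-1}\subseteq E_k$, by the same argument at lower levels and the monotonicity fact. Hence, by linearity, $(A_j+B_jK)E_{k+1}\subseteq E_k$ for all $j$.

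Iterating along any switching signal $\sigma$, a state in $E_p=\mathbb{R}^n$ is mapped after $p$ steps into $E_0=\{0\}$. Since $0$ is a fixed point of the closed loop, $\Phi^I_\sigma(t,x_0)=0$ for all $t\ge p$. So the system is MIFTS with $T=p$, and the controller is linear, time-invariant and mode-independent, as claimed.

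\textbf{Necessity.} Assume the system is MIFTS with some feedback $\varphi$ and time $T$. I first show, by induction on $k$ (exactly as in $(ii)\Rightarrow(i)$ of Lemma~\ref{lemma:LemmaMD}), that any $x_0$ with $\Phi^I_\sigma(k,x_0)=0$ for all $\sigma$ lies in $E_k$.

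\begin{itemize}
\item The base case $k=0$ is trivial, since $\Phi^I_\sigma(0,x_0)=x_0=0\in E_0$.
\item For the inductive step, set $u=\varphi(x_0)$; this is a single input, common to all modes. For each $j$, consider switching signals with $\sigma(0)=j$ and arbitrary tails. The successor $A_jx_0+B_ju$ is steered to $0$ in $k$ steps along every tail, with the same feedback. By the induction hypothesis it therefore lies in $E_k$. Since this holds for all $j$ with the same $u$, we get $x_0\in \bar A^{-1}(E_k^M+\mathrm{Im}\,\bar B)=E_{k+1}$.
\end{itemize}

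Applying this with $k=T$ gives $\mathbb{R}^n\subseteq E_T$. By the fixed-point fact, $E_T\subseteq E_{\max(T,p)}=E_p$, so $E_p=\mathbb{R}^n$.

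\textbf{Main obstacle.} The only point needing care is that Lemma~\ref{LemmaMIequivalences} is stated only for $k\le p-1$, while $T$ may exceed $p$. I handle this by not invoking the lemma's index range directly. The inductive argument above is valid for every $k\in\mathbb{N}$, and the stabilization of $\{E_k\}$ then reduces the conclusion to $E_p$.

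A second point of care is that in the inductive step the input must be common to all modes. This holds automatically because $\varphi$ does not depend on $\sigma$, and it is exactly what the stacked form $\bar A,\bar B$ in~\eqref{eq:algoMI} encodes.
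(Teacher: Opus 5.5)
Your proof is correct and follows the route the paper intends: the paper omits this proof, which mirrors Theorem~\ref{thm:MDFTS} via Lemma~\ref{LemmaMIequivalences}, with sufficiency because $K=U_pQ_p^{-1}$ maps $E_{k+1}$ into $E_k$ under every mode, and necessity by the $(ii)\Rightarrow(i)$ induction; your explicit handling of $T>p$ through the stabilization of $\{E_k\}$ tightens the paper's ``assume $T=p$'' shortcut. One wording slip: it is the images $(A_j+B_jK)q$ of the columns $q$ of $Q_k$, not the columns themselves, that lie in $E_{k-1}\subseteq E_k$.
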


\section{Arbitrary Convergence Rate}\label{sec:ACR}
In this section, we investigate arbitrarily fast stabilization and show that it is is equivalent to fixed-time stabilization, for which a structural characterization was given in the previous section. As a consequence, we also obtain a characterization of systems that admit arbitrarily fast convergence.
We start with the following normal form decomposition result. 

\begin{thm} \label{thm:normalform}
    Consider system~(\ref{eq:linSS}) and assume that its minimal growth rate under mode-independent (respectively, mode-dependent) feedback controller is $ \rho_*^I $ ($ \rho_*^D$).  
Then, there exist a change of feedback and a change of coordinates under which the system admits the representation
\begin{equation}
    z(t+1)= \begin{pmatrix}
        A_{\sigma(t)}^{yy} & A_{\sigma(t)}^{y \xi} \\
        \mathbf{0} & A_{\sigma(t)}^{\xi \xi}
    \end{pmatrix}z(t)+\begin{pmatrix}
        B_{\sigma(t)}^y \\ B_{\sigma(t)}^{\xi}
    \end{pmatrix}v(t),
\end{equation}
where \( z = \begin{pmatrix} y \\ \xi \end{pmatrix} \), and the following hold:
\begin{enumerate}
    \item The autonomous \( y \)-subsystem, described by \( y(t+1) = A_{\sigma(t)}^{yy} y(t) \), is fixed-time stable, i.e., there exists \( T \in \mathbb{N} \) such that for all $y(0) \in \mathbb{R}^{n_p}$, for all $\sigma \in \mathcal{S}$, \( y(t) = 0 \), for all \( t \geq T \).
   
    \item The \( \xi \)-subsystem, described by \( \xi(t+1) = A_{\sigma(t)}^{\xi \xi} \xi(t) + B_{\sigma(t)}^{\xi} v(t) \), admits no non-trivial fixed-time stabilizable subspaces. Furthermore, its minimal growth rate, denoted by \( \rho_*^{I,\xi} \) (respectively, \( \rho_*^{D,\xi} \)), is equal to the minimal growth rate of system~(\ref{eq:linSS}), i.e., \( \rho_*^{I,\xi} = \rho_*^I \) (\( \rho_*^{D,\xi} = \rho_*^D \)).
\end{enumerate}

\end{thm}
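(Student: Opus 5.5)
We build the decomposition from the fixed point $E_p$ of the relevant sequence, \eqref{eq:algoMI} in the mode-independent case and \eqref{eq:algoMD} in the mode-dependent case, and treat both cases in parallel.

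\textbf{Coordinates and feedback.} Let $n_p=\dim E_p$ and choose the basis $Q_p$ constructed before Lemma~\ref{LemmaMIequivalences} (respectively Lemma~\ref{lemma:LemmaMD}). Complete it to an invertible matrix $P=\begin{bmatrix} Q_p & R\end{bmatrix}$ and set $x=Pz$. Take the gain $K=U_p(Q_p^\top Q_p)^{-1}Q_p^\top$ (respectively $K_j$) and apply the change of feedback $u=Kx+v$ (respectively $u=K_{\sigma}x+v$). By the proof of the lemma, $(A_j+B_jK)$ maps $E_{k+1}$ into $E_k$ for every $j$ and every $k$. Hence $E_p$ is invariant under all closed-loop matrices $A_j+B_jK$. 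In the $z$-coordinates the matrices $P^{-1}(A_j+B_jK)P$ are therefore block upper triangular, with a zero lower-left block. The upper-left block $A_j^{yy}$ represents the restriction to $E_p$. Since every $y\in E_p$ reaches $0$ within $p$ steps under any $\sigma$, the $y$-subsystem is fixed-time stable with $T=p$. This gives item 1.

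\textbf{No nontrivial fixed-time stabilizable subspace in $\xi$.} Suppose some nonzero $\xi_0$ could be steered to $0$ in $k$ steps by the $\xi$-subsystem. Equivalently, $\xi_0$ lies in $E_1^\xi$, the first nontrivial set of the sequence for $(A_j^{\xi\xi},B_j^\xi)$. Take $x_0=P(0,\xi_0)$ and apply the same inputs $v$ in the full system. The $\xi$-component then vanishes after $k$ steps, while the $y$-component is some vector in $E_p$. That vector can then be driven to zero by the original feedback. Thus $x_0$ can be driven to $0$ uniformly over $\sigma$. By Lemma~\ref{lemma:LemmaMD} or Lemma~\ref{LemmaMIequivalences}, applied with indices up to $p$ (using $E_k=E_p$ for $k\ge p$), we get $x_0\in E_p$, i.e. $\xi_0=0$, a contradiction.

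\textbf{Equality of growth rates.} This is the main obstacle. One direction, $\rho_*^{\xi}\le\rho_*$, is easy. Given a feedback for the full system achieving rate $\rho$, the $\xi$-component of the closed loop obeys the $\xi$-dynamics with input $v=\varphi(Pz)-KPz$. This input depends on $y$ as well, so I would restrict to initial conditions with $y(0)=0$. Then $\xi(t)$ is a $\xi$-trajectory under a causal, possibly history-dependent, feedback, and it decays at rate $\rho$. By Remark~\ref{rem:DynContr}, dynamic stabilizability at rate $\rho$, obtained from the scaled matrices, implies static stabilizability at that rate.

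For the reverse direction, take a feedback $v=\psi(\xi)$ for the $\xi$-subsystem achieving rate $\rho'$, with $\psi$ homogeneous so that $|\psi(\xi)|\le c|\xi|$. In the cascade, $y$ obeys $y(t+1)=A^{yy}_\sigma y+w(t)$ with $|w(t)|\le c'\rho'^t|\xi_0|$. The fixed-time property gives a bound of the form $|y(t)|\le C\sum_{s=t-p}^{t}\rho'^{s}|\xi_0|+(\text{terms vanishing for }t\ge p)$. Hence $|z(t)|\le C''\rho'^t|z_0|$ for any $\rho'>0$, which is where nilpotency of the $y$-block is crucial. The case $\rho'=0$ follows by taking the infimum. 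Finally, since changes of coordinates and of feedback preserve attainable rates, I would conclude $\rho_*^{\xi}=\rho_*$ in both the mode-dependent and mode-independent cases.
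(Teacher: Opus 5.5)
Your proposal is correct and follows the paper's proof. Both use the same feedback and coordinate change built from $E_p$, the restriction to $y(0)=0$ together with Remark~\ref{rem:DynContr} for $\rho_*^{\xi}\le\rho_*$, and the variation-of-constants bound that exploits the $p$-step nilpotency of the $y$-block for the reverse inequality. Your explicit maximality argument for item 2 supplies a step the paper leaves implicit; it lifts a $\xi$-steering to a full-state steering in $k+p$ steps and applies (ii)$\Rightarrow$(i) with $E_{k+p}=E_p$. Two small fixes: such a $\xi_0$ lies in $E_k^\xi$, not necessarily $E_1^\xi$, and the lifted static feedback should set $v=0$ once $\xi=0$.
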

\begin{proof}
    The proof is presented only for the mode-independent case, as the mode-dependent case can be treated analogously.
    Consider system~(\ref{eq:linSS}). Let \( E_p \subseteq \mathbb{R}^n \) be the fixed point of the sequence \( \{E_k\} \) defined in (\ref{eq:algoMI}), i.e., the largest set of initial conditions for which the system is fixed-time stabilizable in at most \( p \) steps using the control gains \( K \) computed via Lemma~\ref{lemma:LemmaMD}. Let the control input be $u(t) = K x(t) + v(t).$
    Thus, the system dynamics can be written as
    \begin{equation*}
        x(t+1) = (A_{\sigma(t)} + B_{\sigma(t)} K) x(t) + B_{\sigma(t)} v(t) = \bar{A}_{\sigma(t)} x(t) + B_{\sigma(t)} v(t)
    \end{equation*}
    
    Next, define the coordinate transformation \( z = P x = \begin{pmatrix} y \\ \xi \end{pmatrix} \), with \( y \in E_p \). The matrix \(P\) is chosen so that its first \(n_p\) columns form a basis of \(E_p\), and the remaining columns complete a basis of \(\mathbb{R}^n\).
In these coordinates, the system has the following block-triangular form: 
    \begin{equation} \label{eq:NormalFormMI}
        z(t+1) = \begin{pmatrix}
            A_{\sigma(t)}^{yy} & A_{\sigma(t)}^{y \xi} \\
            \boldsymbol{0} & A_{\sigma(t)}^{\xi \xi}
        \end{pmatrix} z(t) + \begin{pmatrix}
            B_{\sigma(t)}^y \\ 
            B_{\sigma(t)}^{\xi}
        \end{pmatrix} v(t)
    \end{equation}

Notice that the minimal growth rate of system \eqref{eq:NormalFormMI} coincides with that of system \eqref{eq:linSS}, as it is preserved under changes of feedback and coordinates.
Moreover, the autonomous \(y\)-subsystem is defined on the invariant subspace \(E_p\). 
Under this change of feedback and because of Theorem \ref{thm:MIFTS} , all trajectories starting in \(E_p\) reach the origin in at most \(p\) steps, uniformly over all admissible switching signals \(\sigma \in \mathcal{S}\). 
Hence, the autonomous \(y\)-subsystem is fixed-time stable.

Now, we prove that $\rho_*^{I,\xi} \leq \rho_*^I$. Let $\rho > \rho_*^I$. Then there exists a static feedback controller $\psi(z(t))$ such that for all $z(0) \in \mathbb{R}^n$ and all $\sigma \in \mathcal{S}$ \\
\[
\|z(t)\| \leq C \rho^t \|z(0)\|.
\]

Now, let us consider an initial state $z(0)=\begin{pmatrix} 0 \\ \xi(0) \end{pmatrix}$. 
From \eqref{eq:NormalFormMI}, $y(t)$ can be written under the form:
\[
y(t) = \Psi(\xi(t-1), \dots, \xi(0), \sigma(t-1), \dots, \sigma(0)).
\]
Thus, the evolution of \( y(t) \) depends only on the trajectory of \( \xi \) and the switching history. Consequently, \( v(t) = \psi(z(t)) \) can be equivalently expressed as
\[
v(t) = \bar{\psi}(\xi(t), \dots, \xi(0), \sigma(t-1), \dots, \sigma(0)),
\]
which shows that \( v(t) \) is independent of the current mode value $\sigma(t)$ and can be implemented via a feedback law with memory.

The \( \xi \)-subsystem evolves independently of \( y \) and is governed by

\[
\xi(t+1) = A_{\sigma(t)}^{\xi\xi} \xi(t) + B_{\sigma(t)}^{\xi} v(t).
\]
Hence, we can write
\[
\|\xi(t)\| \leq \|z(t)\| \leq C \rho^t \|z(0)\| = C \rho^t \left\|\begin{pmatrix} y(0) \\ \xi(0) \end{pmatrix}\right\| = C \rho^t \left\|\begin{pmatrix} 0 \\ \xi(0) \end{pmatrix}\right\|= C \rho^t \|\xi(0)\|.
\]
which shows that $\rho$ is an attainable growth rate for the $\xi$-subsystem using the current mode-independent feedback with memory. By Remark \ref{rem:DynContr}, the same growth rate can also be achieved by a static, memoryless mode-independent feedback controller. 
We can then conclude
\[
\rho_*^{I,\xi} \leq \rho_*^I.
\]

Now, let us prove that $\rho_*^{I,\xi} \geq \rho_*^I$.\\
Let $\rho > \rho_*^{I,\xi}$. Then there exists a static feedback $\varphi(\xi(t))$ such that
\[
\|\xi(t)\| \leq C_{\xi} \rho^t \| \xi(0)\|
\]
Next, consider the $y$-subsystem
\begin{equation*}
    y(t+1)=A_{\sigma(t)}^{yy}y(t) + A_{\sigma(t)}^{y\xi}\xi(t) + B_{\sigma(t)}^{y}\varphi(\xi(t)).
\end{equation*}
and define $d(t)= A_{\sigma(t)}^{y\xi}\xi(t) + B_{\sigma(t)}^{y}\varphi(\xi(t))$. By Remark \ref{rem:DynContr}, we can assume without loss of generality that $\varphi(\xi(t))$ is homogeneous of degree 1. It follows that
\begin{equation*}
    ||d(t)|| \leq C || \xi(t)|| \leq C_d \rho^t || \xi(0)||,
\end{equation*}
where $C_d=C \cdot C_{\xi}$.
We denote by \( \phi^y_{\sigma}(t, s) \) the state transition matrix associated with the \( y \)-subsystem, defined as
\[
\phi^y_{\sigma}(t, s) := \prod_{i = s}^{t-1} A^{yy}_{\sigma(i)} = A^{yy}_{\sigma(t-1)} \cdots A^{yy}_{\sigma(s)}.
\]

The solution of the $y$-subsystem at time $t$, starting from $y(0)$, is given by
\begin{equation*}
    y(t) = \phi^y_{\sigma}(t,0)y(0) + \sum_{s=0}^{t-1}\phi^y_{\sigma}(t,s+1) d(s).
\end{equation*}
By construction of the fixed-time stable $y$-subsystem, we have $\phi_\sigma^y(t,0) = 0$ for all $t \ge p$. 
Moreover, for the sum, the contribution of $d(s)$ at time $s$ affects $y(t)$ only for at most $p$ steps due to the FTS property. 
Hence, all terms in the sum with $s < t-p$ vanish.
 Therefore, for all $t \geq p$, we have $$y(t)= \sum_{s=t-p}^{t-1}\phi^y_{\sigma}(t,s+1) d(s),$$and there exists a constant $\beta$ such that $||A_{\sigma}^{yy}|| \leq \beta$ for all modes.
Hence, we can write
\begin{equation*}
    ||y(t)|| \leq \sum_{s=t-p}^{t-1} ||\phi_{\sigma}^y(t,s+1)|| \cdot ||d(s)|| \leq \sum_{s=t-p}^{t-1} \beta^{t-s-1} C_d \rho^{t+(s-t)}||\xi(0)|| \leq \end{equation*} \begin{equation*} \leq\sum_{s=1}^{p} (\frac{\beta}{\rho})^{s} \beta^{-1} C_d \rho^{t}||\xi(0)|| = \bar \beta \rho^t ||\xi(0)||.
\end{equation*}
For $t < p$, a similar argument gives
\begin{equation*}
    ||y(t)|| \leq \beta^t |y(0)|| + \bar \beta \rho^t || \xi(0)|| \leq (\frac{\beta}{\rho})^t \rho^t ||y(0)|| + \bar \beta \rho^t || \xi(0)||
\end{equation*}
Define $\gamma= \displaystyle \max_{t=\{0,\dots,p\}} (\frac{\beta}{\rho})^t$. 
Then, for all $t \ge 0$ we have \begin{equation*}
    ||y(t)|| \leq \gamma \rho^t ||y(0)|| + \bar \beta \rho^t || \xi(0)||
\end{equation*}
Since also $|| \xi(t)|| \leq C_{\xi} \rho^t ||\xi(0)||$, combining the two terms yields
\begin{equation*}
    ||z(t)|| = \left\| \begin{pmatrix}
        y(t) \\ \xi(t)
    \end{pmatrix} \right\| \leq \gamma \rho^t||y(0)|| + (\bar \beta + C_{\xi})\rho^t||\xi(0)|| \leq C \rho^t ||z(0)||,
\end{equation*}
with $C := \max\{ \gamma, \bar{\beta} + C_\xi \}.
$
Finally, we conclude that $\rho_*^{I,\xi} \geq \rho_*^I$. \\
Since we have also shown that \( \rho_*^{I,\xi} \leq \rho_*^I \), it follows that $$\rho_*^{I,\xi} = \rho_*^I.$$
\end{proof}
We showed that any switched system can be transformed so as to isolate its fixed-time stable subsystem, thereby reducing the analysis to a lower-dimensional subsystem. This decomposition will be instrumental in proving the following theorem, which represents a central result of this work.
Before proceeding, we state a standing assumption on the input matrices. 
\begin{assumption} \label{ass:fullrankB}
The input matrices satisfy
$\operatorname{rank}(\bar B) = m,$
where $\bar B$ is defined in (8).
\begin{rem}
This assumption is not restrictive. 
If the  matrix $\bar B$ has rank $\tilde m < m$, 
one can apply an input transformation $ \tilde u = T^{-1}u$ with a suitable matrix $T \in \mathbb{R}^{m \times \tilde m}$ 
so that the transformed matrices $\tilde B_j = B_j T$ have full column rank $\tilde m$. 
The redundant input directions can thus be eliminated without loss of generality.
\end{rem}
\end{assumption}

\begin{thm} \label{thm:MIConj}
    System (\ref{eq:linSS}) admits arbitrarily fast mode-independent (respectively, mode-dependent) stabilization, i.e., $\rho^I_* = 0$ ($\rho^D_* = 0$), if and only if it is MIFTS (MDFTS).
    
\end{thm}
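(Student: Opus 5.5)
The plan is to prove the two implications separately. The direction ``fixed-time $\Rightarrow$ arbitrarily fast'' is elementary. For the converse, I would use the normal form of Theorem~\ref{thm:normalform} to reduce to a subsystem with no non-trivial fixed-time stabilizable subspace, and then show that such a subsystem, unless trivial, has a strictly positive minimal growth rate. I treat the mode-independent case in detail; the mode-dependent case follows the same lines, with the minor changes indicated below.

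\emph{Sufficiency (MIFTS $\Rightarrow \rho^I_*=0$).} By Theorem~\ref{thm:MIFTS} we may take $u=Kx$ with $\Phi^I_\sigma(t,x_0)=0$ for all $t\ge T$. Let $L:=\max_j\|A_j+B_jK\|$ and fix any $\rho>0$. Then $\|\Phi^I_\sigma(t,x_0)\|\le L^t\|x_0\|\le C\rho^t\|x_0\|$ for $t<T$, where $C:=\max_{0\le t<T}(L/\rho)^t$. The bound holds trivially for $t\ge T$, since the state is zero there. Hence every $\rho>0$ is attainable, and $\rho^I_*=0$.

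\emph{Necessity ($\rho^I_*=0 \Rightarrow$ MIFTS).} Apply Theorem~\ref{thm:normalform}. The $\xi$-subsystem has dimension $n-n_p$, where $n_p=\dim E_p$, and by that theorem its minimal growth rate satisfies $\rho^{I,\xi}_*=\rho^I_*=0$. It suffices to show $n-n_p=0$, because then $E_p=\mathbb{R}^n$ and Theorem~\ref{thm:MIFTS} gives MIFTS. Suppose instead that $n-n_p>0$. Since the $\xi$-subsystem admits no non-trivial fixed-time stabilizable subspace, its first set in~\eqref{eq:algoMI} is trivial: $E_1^\xi=\{0\}$. This means that $\bar A^{\xi}\xi+\bar B^{\xi}v=0$ implies $\xi=0$.

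Before using this, I would remove redundant input directions as in the remark after Assumption~\ref{ass:fullrankB}, so that $\bar B^\xi$ has full column rank. Then $\bar A^{\xi}\xi+\bar B^{\xi}v=0$ also forces $v=0$, so the matrix $[\bar A^\xi\ \bar B^\xi]$ has trivial kernel. Consequently there is $c>0$ such that, for all $\xi$ and $v$,
\[
\|\bar A^\xi\xi+\bar B^\xi v\|\ge c\,\|\xi\|.
\]
Since the left-hand side stacks the $M$ blocks $A^{\xi\xi}_j\xi+B^\xi_jv$, some block must be large:
\[
\max_j\|A^{\xi\xi}_j\xi+B^\xi_j v\|\ge \frac{c}{\sqrt M}\,\|\xi\|.
\]
Now take any mode-independent feedback $\varphi$. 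An adversarial switching signal that, at each step, selects the maximizing mode for the current pair $(\xi(t),\varphi(\xi(t)))$ yields $\|\xi(t)\|\ge (c/\sqrt M)^t\|\xi(0)\|$. This holds for every feedback, so $\rho^{I,\xi}_*\ge c/\sqrt M>0$, a contradiction.

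\emph{Mode-dependent case.} Here $E_1^\xi=\{0\}$ means that for every $\xi\ne0$ there is a mode $j$ with $A^{\xi\xi}_j\xi\notin\mathrm{Im}(B^\xi_j)$. The function
\[
f(\xi)=\max_j \mathrm{dist}\bigl(A^{\xi\xi}_j\xi,\mathrm{Im}(B^\xi_j)\bigr)
\]
is continuous, positively homogeneous of degree one, and positive on the unit sphere, so it has a minimum $c>0$ there. Thus $f(\xi)\ge c\|\xi\|$ for all $\xi$. In this case the feedback $\varphi_j$ is applied only after the mode $j$ is revealed. The adversary therefore chooses the mode attaining $f(\xi(t))$, and whatever input is then applied, $\|A^{\xi\xi}_j\xi+B^\xi_j u\|\ge c\|\xi\|$. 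This gives $\rho^{D,\xi}_*\ge c>0$, and no rank assumption is needed.

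\emph{Main obstacle.} The step I expect to be delicate is ensuring that the $\xi$-subsystem produced by Theorem~\ref{thm:normalform} really has $E_1^\xi=\{0\}$. One must check that ``no non-trivial fixed-time stabilizable subspace'' matches the first step of recursion~\eqref{eq:algoMI} applied to the $\xi$-block, via maximality of $E_p$. I would argue this directly. If $\xi_0\ne0$ could be sent to $0$ in one step for all modes by some $v$, then $\begin{pmatrix}0\\ \xi_0\end{pmatrix}$ would be mapped into $E_p$ in one step. It would therefore lie in $E_{p+1}=E_p$, contradicting $\xi_0\ne0$.
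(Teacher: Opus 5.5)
Your proposal is correct and follows essentially the paper's route. The easy direction uses the linear gain $K$ from Theorem~\ref{thm:MIFTS} with a finite-horizon constant. The converse reduces via Theorem~\ref{thm:normalform} to the $\xi$-subsystem and then uses a positively homogeneous lower bound with an adversarial switching signal, which is exactly the content of Lemma~\ref{lemma:MIConj} and its mode-dependent analogue. Two technical details differ from the paper. You obtain the lower bound from injectivity of $[\bar A^\xi\ \bar B^\xi]$, after removing redundant inputs of the $\xi$-block rather than of the original system, instead of the paper's Lipschitz/compactness argument for $w$. You also check explicitly, via $E_{p+1}=E_p$, that $E_1^\xi=\{0\}$, a step the paper asserts in Theorem~\ref{thm:normalform} without proof.
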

The proof of Theorem~\ref{thm:MIConj} relies on the following lemmas.
\begin{lemma} \label{lemma:MIConj} 
    If $\rho_*^I = 0$, then there exist $x \neq 0$ and $u \in \mathbb{R}^m$ such that $A_jx + B_ju = 0$ for all $j \in \mathcal{M}$.
\end{lemma}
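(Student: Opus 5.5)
We argue by contraposition: assuming that no nonzero $x$ admits a common input $u$ with $A_jx+B_ju=0$ for all $j\in\mathcal M$, we show that $\rho_*^I$ is bounded below by a positive constant. In the stacked notation of \eqref{eq:algoMI}, the hypothesis of the lemma is exactly the existence of $(x,u)$ with $x\neq 0$ and $\bar A x+\bar B u=0$, i.e.\ $E_1\neq\{0\}$. The negation is therefore that $\bar A x\notin \mathrm{Im}(\bar B)$ for every $x\neq 0$.

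The first step is a uniform lower bound. Define $g(x):=\min_{u\in\mathbb R^m}\|\bar A x+\bar B u\|$, which is the Euclidean distance from $\bar A x$ to the subspace $\mathrm{Im}(\bar B)$. The function $g$ is continuous (it is a seminorm, namely the norm of the projection of $\bar A x$ onto $\mathrm{Im}(\bar B)^\perp$) and positively homogeneous of degree one. Under the negated hypothesis it is strictly positive on the unit sphere. By compactness, $c:=\min_{\|x\|=1}g(x)>0$, hence $\|\bar A x+\bar B u\|\ge c\|x\|$ for all $x$ and all $u$. Since $\|\bar A x+\bar B u\|^2=\sum_{j}\|A_jx+B_ju\|^2$, it follows that for every $x$ and every $u$ there is a mode $j$ with $\|A_jx+B_ju\|\ge \frac{c}{\sqrt M}\|x\|$. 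The important point is that the same $u$ is used for all modes, which is precisely the mode-independent constraint.

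The second step constructs an adversarial switching signal. Take any mode-independent feedback $\varphi$ and any $x_0\neq 0$. We build $\sigma$ recursively: given $x(t)$, set $u=\varphi(x(t))$ and let $\sigma(t)$ be a mode realizing the bound above. This gives $\|x(t+1)\|\ge c'\|x(t)\|$ with $c':=c/\sqrt M$, and therefore $\|\Phi^I_\sigma(t,x_0)\|\ge c'^t\|x_0\|$ for all $t$. The signal depends on $x_0$ and $\varphi$, but this is allowed, because \eqref{eq:Exp-MI} must hold for every $\sigma\in\mathcal S$. Any attainable rate $\rho$ must then satisfy $c'^t\le C\rho^t$ for all $t$, which forces $\rho\ge c'$. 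Hence $\rho_*^I\ge c'>0$, contradicting $\rho_*^I=0$.

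The only delicate point is the positivity of $c$, which relies on the continuity and homogeneity of the distance function. This is standard and avoids any need for Assumption~\ref{ass:fullrankB}. No restriction on the feedback (linearity, homogeneity) is needed, because the lower bound holds for every value of $u$.
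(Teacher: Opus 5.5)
Your proof is correct and follows the same strategy as the paper: argue by contradiction, use compactness of the unit sphere to get a uniform positive lower bound on $\|A_jx+B_ju\|/\|x\|$ for a worst mode $j$, valid for every common input $u$, and then let an adversarial switching signal force $\|x(t)\|\ge c'^t\|x_0\|$. The small differences work in your favor: working with $\min_u\|\bar A x+\bar B u\|=\mathrm{dist}(\bar A x,\mathrm{Im}(\bar B))$ instead of the paper's $\min_u\max_j\|A_jx+B_ju\|$ makes continuity immediate and removes any reliance on Assumption~\ref{ass:fullrankB} or on the hand-made Lipschitz estimate, at the harmless cost of the factor $1/\sqrt{M}$; and your explicit recursive construction of $\sigma$ from $x_0$ and $\varphi$ makes rigorous what the paper only asserts loosely as holding ``for any trajectory.''
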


\begin{proof}
We proceed by contradiction. Assume that for every $x \neq 0$ and every $u \in \mathbb{R}^m$, there exists at least one $j \in \mathcal{M}$ such that 
\[
A_jx + B_j u \neq 0.
\]
Under Assumption 1, $f(x,u) := \max_{j \in \mathcal{M}} \|A_j x + B_j u\|$ is unbounded over $\mathbb{R}^m$.
 By continuity of $f$, the minimum is therefore attained at some finite $\bar u \in \mathbb{R}^m$.
 
Define
\[
w(x) := \min_{u \in \mathbb{R}^m} \max_{j \in \mathcal{M}} ||A_jx+B_ju||.
\]
To show that $w$ is Lipschitz continuous, let $x, x' \in \mathbb{R}^n$ be arbitrary. 
Let $\bar u_x$ and $\bar u_{x'}$ denote minimizers of $w(x)$ and $w(x')$, respectively, i.e.,
\[
w(x) = \max_{j \in \mathcal{M}} \|A_j x + B_j \bar u_x\|, 
\qquad
w(x') = \max_{j \in \mathcal{M}} \|A_j x' + B_j \bar u_{x'}\|.
\]
First, using the minimizer $\bar u_{x'}$ of $w(x')$, we have
\[
w(x) \le \max_{j \in \mathcal{M}} \|A_j x + B_j \bar u_{x'}\|.
\]
By the triangle inequality,
\[
\max_{j \in \mathcal{M}} \|A_j x + B_j \bar u_{x'}\| \leq \max_{j \in \mathcal{M}} \|A_j x' + B_j \bar u_{x'}\| 
+ \max_{j \in \mathcal{M}} \|A_j (x - x')\|,
\]
which gives
\[
w(x) \leq w(x') + \max_{j \in \mathcal{M}} \|A_j (x - x')\|.
\]
Similarly, swapping the roles of $x$ and $x'$,
\[
w(x') \leq w(x) + \max_{j \in \mathcal{M}} \|A_j (x' - x)\|.
\]
Combining the two inequalities, we obtain
\[
||w(x') - w(x)|| \le \max_{j \in \mathcal{M}} \|A_j (x' - x)\| = L \|x' - x\|,
\]
where $L := \max_{j \in \mathcal{M}} \|A_j\|$.
Hence, $w$ is Lipschitz continuous.
Consequently, when we restrict $w$ to the unit sphere $\{x \in \mathbb{R}^n : \|x\| = 1\}$, which is compact, the continuous function $w$ attains its minimum. 
By assumption, there is no common input $u$ that simultaneously cancels $A_jx+B_ju$ for all modes.
By the previous argument, $w(x) > 0$ for all $x \neq 0$, so there exists $\alpha > 0$ such that
\[
w(x) \ge \alpha, \qquad \forall x \in \mathbb{R}^n, ||x||=1.
\]
Finally, for any trajectory starting from $x(0) \neq 0$, the homogeneity of the system implies
\[
\|x(t)\| \ge \alpha^t \|x(0)\|, \quad \forall t,
\]
which shows that $\rho_*^I \ge \alpha > 0$, contradicting the assumption $\rho_*^I = 0$.
\end{proof}

We now state the corresponding result for the mode-dependent case.  

\begin{lemma}
    If $\rho_*^D = 0$, then there exists $x \neq 0$ such that for all $j \in \mathcal{M}$ there exists $u_j \in \mathbb{R}^m$ with $A_j x + B_j u_j = 0$.
\end{lemma}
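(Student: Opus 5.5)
We argue by contradiction, mirroring the proof of Lemma~\ref{lemma:MIConj}, but now with the minimisation over inputs carried out separately for each mode. Suppose that for every $x \neq 0$ there is at least one mode $j \in \mathcal{M}$ such that $A_j x + B_j u \neq 0$ for all $u \in \mathbb{R}^m$. For each mode define
\[
w_j(x) := \min_{u \in \mathbb{R}^m} \|A_j x + B_j u\| = \|(I - \Pi_j) A_j x\|,
\]
where $\Pi_j$ is the orthogonal projector onto $\mathrm{Im}(B_j)$. The minimum exists because this is a least-squares problem; no rank assumption on $B_j$ is needed here. We then set $w(x) := \max_{j \in \mathcal{M}} w_j(x)$. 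Each $w_j$ is a seminorm composed with a linear map, so it is continuous (Lipschitz with constant $\|A_j\|$) and positively homogeneous of degree one. Hence $w$ has the same two properties.

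Next we show that $w$ is bounded below on the unit sphere. Under the contradiction hypothesis, for each $x \neq 0$ some $j$ satisfies $A_j x \notin \mathrm{Im}(B_j)$, which means $w_j(x) > 0$ and therefore $w(x) > 0$. Since the unit sphere is compact and $w$ is continuous, $w$ attains a minimum $\alpha > 0$ there. By homogeneity, $w(x) \geq \alpha \|x\|$ for all $x \in \mathbb{R}^n$.

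Now take any mode-dependent feedbacks $\varphi_1, \dots, \varphi_M$ and any state $x(t) \neq 0$. The adversarial switching signal chooses $\sigma(t) = j^*(x(t))$, a mode attaining the maximum in $w(x(t))$. This choice is legitimate because $\sigma$ ranges over all of $\mathcal{S}$ and may be built recursively from the trajectory. It gives
\[
\|x(t+1)\| = \|A_{j^*} x(t) + B_{j^*} \varphi_{j^*}(x(t))\| \geq w_{j^*}(x(t)) \geq \alpha \|x(t)\|.
\]
By induction, along this particular $\sigma$ we get $\|\Phi^D_\sigma(t, x_0)\| \geq \alpha^t \|x_0\|$ for every $t$. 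Suppose $\rho < \alpha$ were attainable with some constant $C$. Then $\alpha^t \leq C \rho^t$ would have to hold for all $t$, which is impossible. Hence $\rho^D_* \geq \alpha > 0$, contradicting $\rho^D_* = 0$.

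The main point needing care is the adversarial step: the lower bound must hold for one single switching sequence, not merely for each step taken in isolation. That is why $\sigma$ is constructed recursively from the realised trajectory; the definitions quantify over all $\sigma \in \mathcal{S}$, so such a trajectory-dependent choice is admissible. Apart from this, the argument is simpler than in the mode-independent case, because the inner minimisation decouples across modes.
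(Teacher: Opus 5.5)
Your proof is correct and follows the same route as the paper. You argue by contradiction, set $w(x)=\max_{j}\min_{u}\|A_jx+B_ju\|$, use continuity, homogeneity and compactness of the unit sphere to get a positive lower bound $\alpha$, and conclude $\rho_*^D\ge\alpha>0$.

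Two of your details sharpen the paper's account. The explicit recursive choice of the worst-case mode $\sigma(t)=j^*(x(t))$ makes rigorous the step the paper states loosely as ``any trajectory grows''; that growth in fact holds only along such an adversarial switching signal. Your remark that the inner minimum is attained without any rank assumption is also correct.
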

\begin{proof}
We proceed by contradiction, in analogy with Lemma~\ref{lemma:MIConj}.  
Assume that for every $x \neq 0$, there exists at least one $j \in \mathcal{M}$ such that $\forall u_j \in \mathbb{R}^m$
\[
A_j x + B_j u_j \neq 0 
\]

Consider any \(x \neq 0\), and let \(j_x \in \mathcal{M}\) be a mode such that for all \(u \in \mathbb{R}^m\),
\[
A_{j_x} x + B_{j_x} u \neq 0.
\]
 Define
\[
w(x) := \max_{j \in \mathcal{M}} \min_{u_j \in \mathbb{R}^m} \|A_j x + B_j u_j\|.
\]
Therefore, 
\[
w(x) \ge \min_{u \in \mathbb{R}^m} \|A_{j_x} x + B_{j_x} u\| > 0.
\]

The rest of the argument follows exactly as in the mode-independent case: $w$ is Lipschitz continuous, attains a strictly positive minimum on the unit sphere, and homogeneity implies that any trajectory grows at at a strictly positive rate. This contradicts $\rho_*^D = 0$.
\end{proof}

We are now in position to prove Theorem \ref{thm:MIConj}. 
We provide the proof in the mode-independent setting; the mode-dependent case follows from similar arguments.
\begin{proof}
Necessity. Consider system \eqref{eq:linSS} and the sequence $\{E_k\}$ constructed in \eqref{eq:algoMI}. Applying the feedback law $\varphi(t)=Kx(t)$ we have that $\Phi_{\sigma}(t,x_0)=0$, for all $t \geq p$, which trivially implies that for all $\rho > 0$, there exists $C>0$ such that 
\begin{equation} \label{acrProofLemmaMI} 
   | \Phi_{\sigma}(t,x_0)| \leq C \rho^t |x_0|,
\end{equation}
implying that $\rho^I_*=0$. Now we will prove that (\ref{acrProofLemmaMI}) holds also for $t < p$. \\
Defining  
\[
\mu = \max_{j \in \mathcal{M}} \| A_j + B_j K \|,
\]
we have   
\[
\|x(t)\| \leq \mu^t \|x(0)\|, \quad \forall t < p.
\]
If \( \mu \leq 1 \), we have  
\[
\|x(t)\| \leq \mu^t \|x(0)\| \leq \rho^t \rho^p \|x(0)\|, \quad \forall t < p.
\]
and in the case \( \mu > 1 \), we obtain  
\[
\|x(t)\| \leq \mu^t \|x(0)\| \leq \mu^{p-1} \rho^t \rho^p \|x(0)\|, \quad \forall t < p.
\]
Now, defining \( C' = \max(1, \mu^{p-1}) \), we can conclude that  
\[
\|x(t)\| \leq C' \rho^p \rho^t \|x(0)\|, \quad \forall t \in \mathbb{N}.
\]
Thus, (\ref{acrProofLemmaMI}) holds with  
\[
C = \frac{C'}{\rho^p}. 
\]

Sufficiency. Let us rewrite system~(\ref{eq:linSS}) in the form~(\ref{eq:NormalFormMI}). Following Theorem~\ref{thm:normalform}, one subsystem is fixed-time stable, while the other admits no non-trivial FTS subspaces; in this case, the minimal mode-independent growth rate of the latter subsystem equals zero. Now consider this subsystem. By Lemma~\ref{lemma:MIConj}, if its minimal mode-independent growth rate is zero, then there exist a nonzero state $x$ and a control input $u \in \mathbb{R}^m$ such that $A_j x + B_j u = 0$ for all $j \in \mathcal{M}$, 
that is, the system can be driven to the origin in one step. This would imply the existence of a nontrivial fixed-time stable subspace, contradicting the assumption that none exists. Therefore, the full system must be MIFTS.

\end{proof}
\section{Illustrative example}

In this section, we present a numerical example to illustrate the results discussed in Sections~\ref{sec:FTS} and~\ref{sec:ACR}.

\begin{myexample}
    Consider system~(\ref{eq:linSS}) with the following matrices \newline
$    A_1= \begin{bmatrix}
            -1 & -2& 2.5\\
 1& 1& -1\\
 0& 1& 0  \end{bmatrix}, A_2= \begin{bmatrix}
     -2 &-1 &3.5\\
 2& 2 &-1\\
 0& -1& 0.5\\
 \end{bmatrix}, B_1=\begin{bmatrix}
      -1\\
 0\\
 1
 \end{bmatrix}, B_2= \begin{bmatrix}
     1\\
 0\\
 -1
 \end{bmatrix}
   $.
We begin by showing that the system is MDFTS. Using the recursive construction in~(\ref{eq:algoMD}), we obtain the sequence of subspaces $E_k$ and find that $E_3 = \mathbb{R}^n$. By Theorem~\ref{thm:MDFTS}, this establishes that the system is MDFTS. The corresponding state-feedback control law takes the form $u(t) = K_{\sigma(t)} x(t)$, with mode-dependent gains derived from Lemma \ref{lemma:LemmaMD}: $K_1 = \begin{bmatrix}
    0 & -1 & -0.5
\end{bmatrix}, K_2 = \begin{bmatrix}
    0 & -1 & 0.5
\end{bmatrix}$. These feedback matrices ensure that any initial condition is driven to the origin within at most three steps, regardless of the switching sequence. The closed-loop evolution is depicted in Figure \ref{fig:MD}, where the trajectory is seen to successively enter the subspaces $E_2$ and $E_1$ before converging to the origin. 

\begin{figure}[H] 
    \centering
    \includegraphics[width=0.6\textwidth]{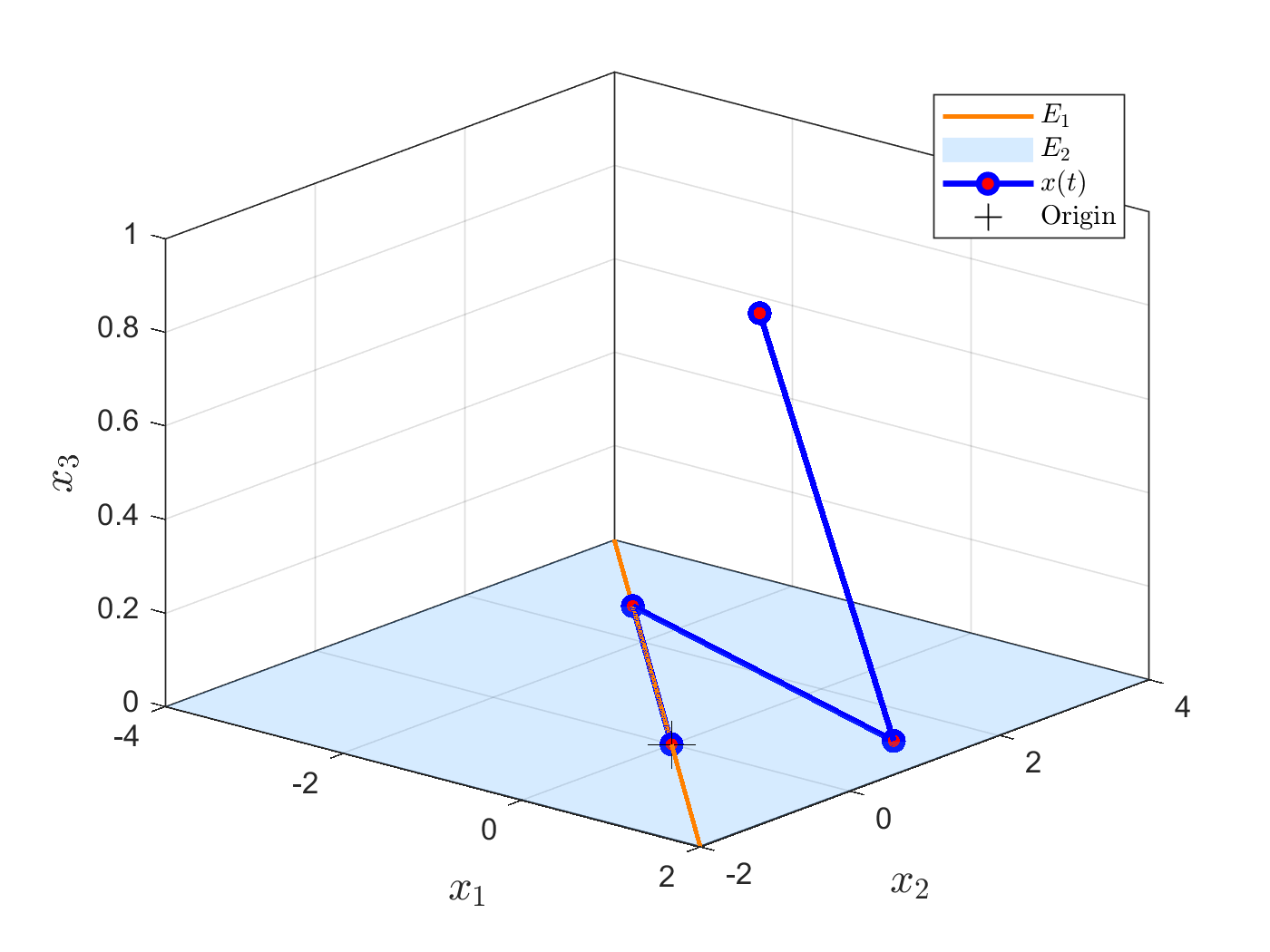}
    \caption{Closed-loop trajectory under mode-dependent controller. The trajectory reaches the origin in 3 time steps.}
    \label{fig:MD}
\end{figure}

However, the system is not MIFTS. In fact, from (\ref{eq:algoMI}) we can compute the largest set of initial states  from which the system can be fixed-time stabilized under mode-independent control and verify that $E_2$ is a subspace of $\mathbb{R}^3$ of dimension 2.

We now apply the normal form decomposition to this system.  
We perform a change of feedback of the form \( u(t) = Kx(t) + v(t) \), where \( K \) is computed according to Lemma~\ref{LemmaMIequivalences} and takes the value $K=\begin{bmatrix}
    0 & -1 & 0
\end{bmatrix}$. We also apply a change of coordinates \( z = Px \), where the matrix \( P \) is constructed by concatenating a basis of \( E_2\) with an arbitrary vector such that \( P \) is invertible. Let us choose
$
P = \begin{bmatrix}
1 & 2 & 1 \\
1 & 1 & 1 \\
0 & 0 & 1
\end{bmatrix}.
$
Under this transformation, the system can be written in the structured form (\ref{eq:NormalFormMI}) with matrices given by:

\[
A_1^{yy} = \begin{bmatrix}
0 & 1 \\
0 & 0
\end{bmatrix}, \quad
A_1^{y\xi} = \begin{bmatrix}
0 \\
2
\end{bmatrix}, \quad
A_1^{\xi\xi} = \begin{bmatrix}
0.5
\end{bmatrix}, \quad
B_1^y = \begin{bmatrix}
0 \\
0
\end{bmatrix}, \quad
B_1^\xi = \begin{bmatrix}
1
\end{bmatrix},
\]

\[
A_2^{yy} = \begin{bmatrix}
0 & 2 \\
0 & 0
\end{bmatrix}, \quad
A_2^{y\xi} = \begin{bmatrix}
0 \\
3
\end{bmatrix}, \quad
A_2^{\xi\xi} = \begin{bmatrix}
0.5
\end{bmatrix}, \quad
B_2^y = \begin{bmatrix}
0 \\
0
\end{bmatrix}, \quad
B_2^\xi = \begin{bmatrix}
-1
\end{bmatrix}.
\]

From Theorem~\ref{thm:normalform}, we conclude that the \( y \)-subsystem is fixed-time stable, which is also evident from the structure of the corresponding matrices. Moreover, the \( \xi \)-subsystem admits no non-trivial FTS subspace. In this example, the \( \xi \)-subsystem is one-dimensional, which allows for a straightforward computation of its minimal growth rate, yielding \( \rho_{\xi}^I = 0.5 \).
Therefore, by Theorem~\ref{thm:normalform}, we deduce that the minimal growth rate of the original system is also \( 0.5 \). Notice that the minimal growth rate is achieved for $v(t)=0$, as, without the knowledge of the active mode, the system’s evolution may exhibit growth, and the optimal control strategy in this setting is to apply no control. Due to this reason, and since $A_1^{\xi \xi}=A_2^{\xi \xi}=0.5$, the switching has no effect on the $ \xi $-subsystem, which behaves like a linear system with exponential decay rate 0.5.
The plot in Figure \ref{fig:MI} below shows the trajectories with \( v(t) = 0 \) for two different initial conditions. The trajectory starting from a point in \( E_2 \) converges in fixed time, specifically, in two steps, while, more generally, any initial condition in \( \mathbb{R}^3 \) leads to exponential convergence, as illustrated by the other trajectory in the figure.
Moreover, in the time-scale plot in Figure \ref{fig:MI_2} it can be seen, in fact, that the norm of the trajectory starting from a generic initial condition in \( \mathbb{R}^3 \) decays at rate 0.5.
\begin{figure}[H] 
    \centering
    \includegraphics[width=0.6\textwidth]{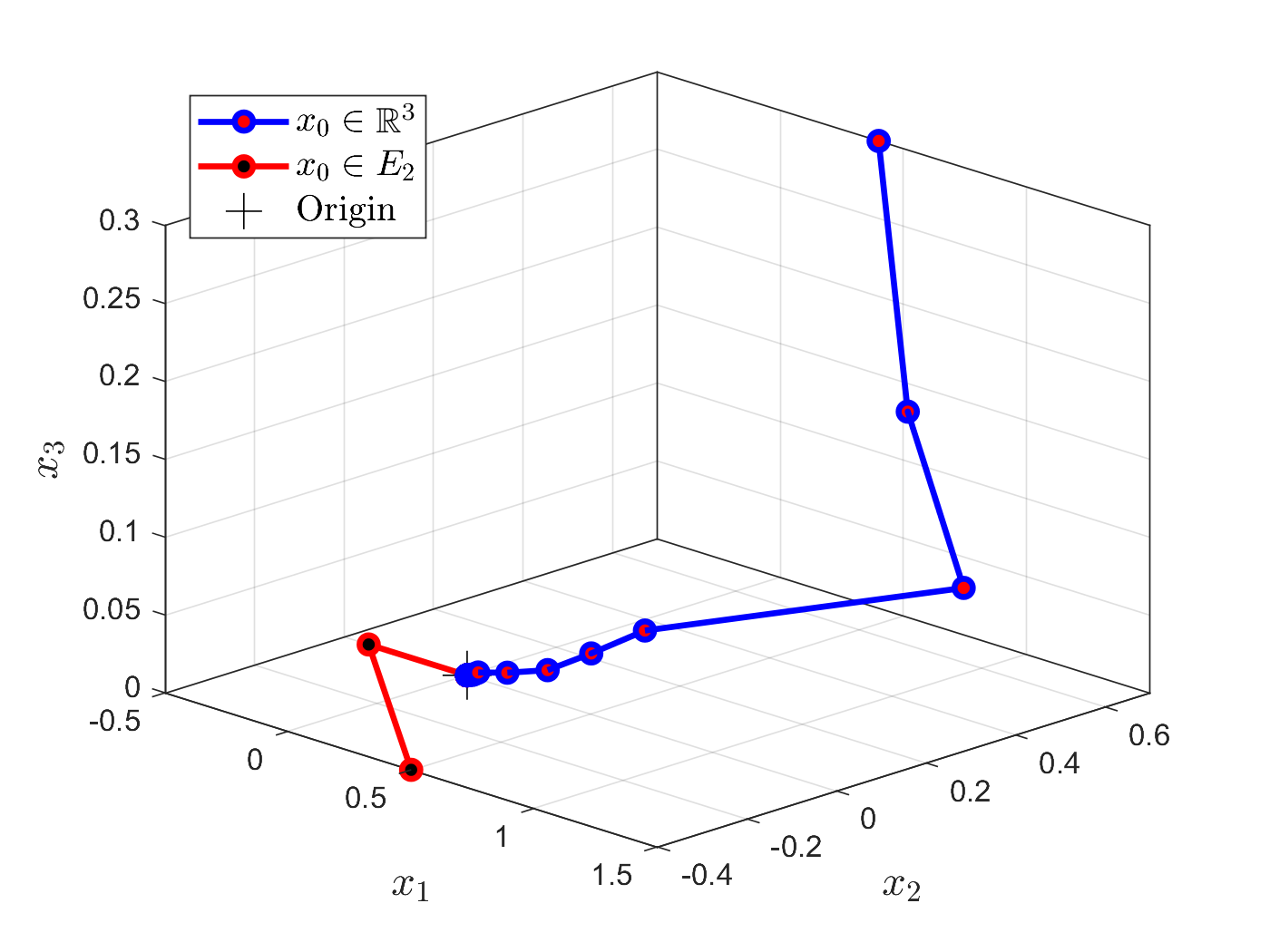}
    \caption{Closed-loop trajectories under mode-dependent controller. One trajectory exhibits exponential convergence, while the other reaches the origin in 2 time steps.}
    \label{fig:MI}
\end{figure}
\begin{figure}[H] 
    \centering
    \includegraphics[width=0.6\textwidth]{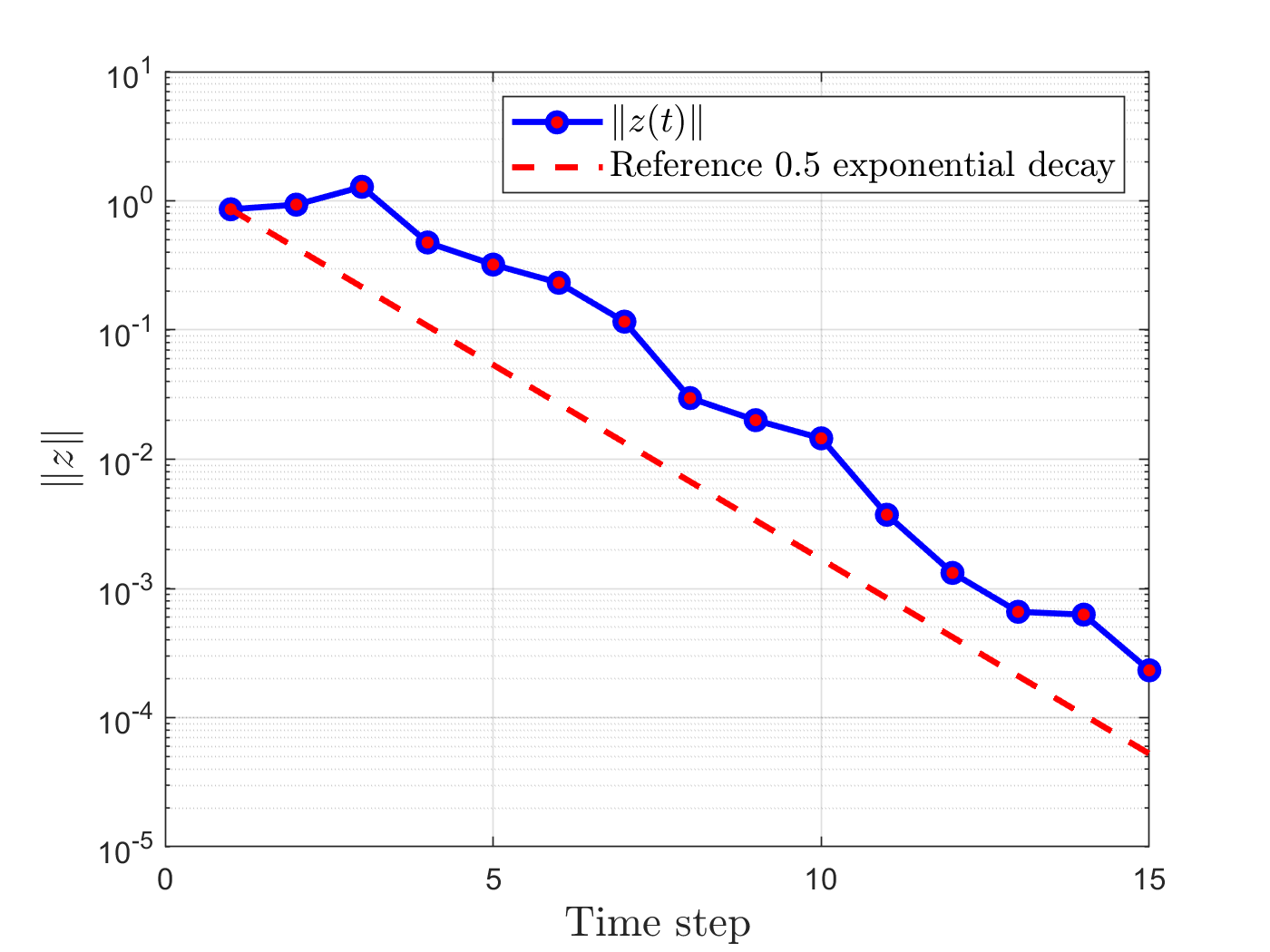}
    \caption{Time-scale plot of the closed-loop trajectory under mode-independent controller.}
    \label{fig:MI_2}
\end{figure}
\end{myexample}

\section{Conclusions}
In this work, we provided the characterization of discrete-time switched linear systems that admit arbitrarily fast exponential convergence. To this end, we first presented a systematic algorithm based on recursive subspace constructions, and we derived necessary and sufficient geometric conditions for both mode-dependent and mode-independent fixed-time stabilization. Then, we established the equivalence between fixed-time stabilizability and arbitrarily fast convergence, thereby extending a classical property from the autonomous case to systems with control inputs. As a further contribution, we introduced a normal form decomposition for switched linear systems, showing that any such system can be split into a fixed-time stable component and a residual subsystem that retains the original system's minimal growth rate.
Future work will address the constrained switching setting, with the goal of characterizing the largest classes of switching signals that permit arbitrarily fast stabilization, and identifying conditions under which subsets of switching sequences yield zero minimal convergence rate.

\bibliography{biblio} 
\bibliographystyle{plain}

\end{document}